\documentclass[12pt,reqno,a4paper]{amsart}
\usepackage[utf8]{inputenc}
\usepackage{
    amsmath,  amsfonts, amssymb,  amsthm,   amscd,
    gensymb,  graphicx, comment,  etoolbox, url,
booktabs, stackrel, mathtools,enumitem, mathdots,  microtype, lmodern,    mathrsfs, graphicx, tikz,  longtable,tabularx, float, tikz, pst-node, tikz-cd, multirow, tabularx, amscd,  bm, array, makecell, diagbox, booktabs,ragged2e, caption, subcaption}

\usepackage[all]{xy}
\usepackage{makecell,slashbox}

\usepackage{xcolor}
\usepackage[utf8]{inputenc}
\usepackage{microtype, fullpage, wrapfig,textcomp,mathrsfs,csquotes,fbb}
\usepackage[colorlinks=true, linkcolor=blue, citecolor=blue, urlcolor=blue, breaklinks=true]{hyperref}
\usepackage[capitalise]{cleveref}
\usepackage{todonotes}
\usetikzlibrary{positioning}
\usetikzlibrary{shapes,arrows.meta,calc}

\usetikzlibrary{arrows}

\newtheorem{theorem}{Theorem}[section]

\newtheorem{corollary}[theorem] {Corollary}
\newtheorem{definition}[theorem]{Definition}

\newtheorem{lemma}[theorem]{Lemma}

\newtheorem{proposition}[theorem]{Proposition}
\newtheorem{remark}[theorem]{Remark}

\newcommand\R{\mathbb{R}}
\newcommand\Z{\mathbb{Z}}
\newcommand\C{\mathbb{C}}
\newcommand{\cs}{\mathcal{S}}
\newcommand{\TC}{\mathbf{TC}}

\newcommand{\cl}{\mathbf{cl}}
\newcommand{\zcl}{\mathbf{zcl}}
\newcommand{\cat}{\mathbf{cat}}
\newcommand{\X}{X_{\bullet}}

\newcommand{\id}{\mathrm{id}}
\newcommand{\vr}{\mathrm{VR}}

\def\dist#1#2#3#4{{d}_{#1}^{#2}({#3},{#4})}
\def\VR#1{\mathrm{VR}_{\bullet}({#1})}

\def\nil{\mathrm{nil}}
\def\ra{\rightarrow}

\newcolumntype{x}[1]{>{\centering\arraybackslash}p{#1}}

\title{Equivariant persistence topological complexity}

\author{Navnath Daundkar}
\address{Indian Institute of Technology Madras, Chennai, India}
\email{navnath@iitm.ac.in}
\author[A. Sarkar]{Ankur Sarkar}
\address{Stat Math Unit, Indian Statistical Institute, Kolkata, India}
\email{ankurimsc@gmail.com}
\author{Bittu Singh}
\address{Indian Institute of Technology Bombay, Mumbai, India}
\email{22d0781@iitb.ac.in}

\begin{document}
\begin{abstract}
We introduce equivariant persistent analogues of Lusternik–Schnirelmann category, topological complexity, cup length, and zero-divisors cup length for persistent spaces with group actions, and establish their stability. In particular, we investigate their behavior under equivariant interleavings and obtain bounds for the corresponding erosion distances. For Vietoris–Rips filtrations of compact metric spaces equipped with compatible group actions, we relate the erosion distances between the resulting equivariant persistent invariants to appropriate equivariant versions of the Gromov–Hausdorff distance.

\end{abstract}
\keywords{Persistent LS category, persistent topological complexity, equivariant topological complexity, equivariant LS category}
\subjclass[2020]{55M30, 55N31, 55N91}
\maketitle

\section{Introduction}
A motion planning problem in robotics involves developing an algorithm that takes a pair of configurations of a mechanical system as input and produces a continuous path connecting these two configurations. 
In many applications, the underlying space may depend on a parameter, and it is therefore natural to consider a family of spaces together with maps describing their evolution as the parameter varies. Such a family provides a natural framework for studying the evolution of topological properties or the shape of the data set across different parameter values and is commonly referred to as a persistent space. Persistent homology provides one important approach to extracting and tracking topological information from such parameterized spaces.

Several classical homotopy invariants provide quantitative measures of the  complexity of a space. Among them, the Lusternik--Schnirelmann category \cite{LS_cat} and topological complexity \cite{Farber_TC} are fundamental examples of such invariants, while cup length and zero-divisors cup length provide important cohomological lower bounds for these invariants. In the real world, spaces can be discrete, noisy, and scale-dependent. Recently Contessoto et. all \cite{Persistent_cup_legth} introduced the persistent cup length for the purpose of capturing ring-theoretic information about the evolution of the cohomology ring structure across a filtration and established homotopical stability with respect to homotopy interleaving distance. More generally  M\'emoli et. all introduced Persistent LS category in \cite{MSZ24} and obtained stability result with respect to homotopy-interleaving distance and Gromov-Hausdorﬀ distance. Since the classical topological complexity introduced by Farber \cite{Farber_TC} is defined for a fixed topological space, it does not directly capture the evolution of motion-planning complexity across scales. On the other hand, standard persistent homology primarily records homological information rather than topological complexity. To address this, M\'emoli and Zhou introduced persistent analogues of topological complexity and zero-divisors cup length as a cohomological lower bound in \cite{mavsmoli2025persistence}, thereby providing a framework for studying motion-planning complexity across scales.

This naturally raises the question of how to handle situations in which the underlying data set or topological space possesses symmetries that should be preserved throughout the persistence process. Such symmetries arise naturally in mechanical systems, where configuration spaces often admit group actions representing symmetries of the system. 
In this direction, Fadell \cite{Fadelleqcat} and 
Marzantowicz \cite{Eqlscategory} studied equivariant version of LS category which is later extended for equivariant maps by Garc{\'\i}a-Calcines and Daundkar in \cite{daundkar2025equivariant}.
To incorporate these symmetries into the motion planning problem, Colman and Grant developed equivariant topological complexity in  \cite{Colman_Grant_Eqtc}, which is the minimum number of domains of continuity of the motion planner in the space that preserves the symmetry. More recently, Daundkar, Santhanam and Singh \cite{daundkar2026equivariant} developed these notions in a more gerneral setting, in particular introducing equivariant topological complexity of maps.
However, similar to the classical topological complexity, this notion does not capture the complexity of motion planning when the underlying space varies persistently. Thus, to study persistent motion planning in the presence of symmetries, it is natural to consider a persistent $G$-space. More precisely, a $G$-persistent object in a category $\mathcal{C}$ is a functor $F\colon (P,\leq)\to \mathcal{C}^G$, where $(P,\leq)$ is preorderd set and $\mathcal{C}^G$ is the category of $G$-objects in $\mathcal{C}$ and $G$-equivariant morphism.

For a persistent $G$-space, the structure maps preserve the given group action, allowing equivariant invariants to be studied throughout the persistence process. We introduce the notion of a $G$-categorical invariant (see Definition~\ref{G Categorical Invariant}) and show that the equivariant cup length (Definition \ref{cup_length}), zero-divisors cup length (Definition \ref{zcl}), Lusternik--Schnirelmann category (Definition \ref{LS_cat}), and topological complexity (Definition \ref{per_tc}) satisfy this property (see Propositions~\ref{Categorical invariance} and \ref{Categorical invariance 2}). We also introduce the equivariant erosion distance for interval invariants associated to persistent $G$-spaces (see Definition~\ref{errosion}). This provides a common framework for associating interval invariants to persistent $G$-spaces.
In this setting, we prove that the equivariant erosion distance between the resulting persistent invariants is bounded above by the equivariant interleaving distance (Theorem~\ref{E vs GI}). We further obtain a comparison with the equivariant homotopy-interleaving distance under an appropriate homotopy-invariance assumption (Theorem~\ref{erosion_vs_homotopy_interleaving}). For Vietoris--Rips filtrations, Proposition~\ref{prop: d^G VR leq 2d_GH} and Corollary~\ref{thm: HI between VR leq 2d_GH} provide the corresponding homotopy-interleaving estimate in terms of the equivariant Gromov--Hausdorff distance. As a consequence, for $\mathbf{I}^G=\cl_G,\zcl_G,\cat_G,$ or $\TC_G$, we obtain the corresponding erosion-distance bounds for persistent $G$-spaces and, in the Vietoris--Rips setting, for compact $G$-metric spaces (see Corollary~\ref{corollary: erosion bounds}).

\section{Preliminaries}
In this subsection, we review the results and constructions that will be used in our equivariant persistent setting. We first recall persistent equivariant cohomology for filtered $G$-spaces. We then recall the categorical framework of persistent $G$-objects, $G$-interleavings, and the associated $G$-Gromov--Hausdorff distance. Finally, we review the equivariant versions of topological complexity and Lusternik--Schnirelmann category for equivariant maps, together with the cohomological lower bound given by equivariant zero-divisors cup length.

\subsection{Persistent versions of LS category and topological complexity}

We recall the basic definition and properties of Persistent topological complexity and persistent LS category from \cite{mavsmoli2025persistence}.
\begin{definition}
    Let $X_{\bullet}\colon (\mathbb{R},\leq)\to \mathrm{Top}$ be a persistent space. Then the \emph{persistent topological complexity} of $X_{\bullet}$ is the functor 
   $ \TC(X_{\bullet})\colon (\mathscr{Int},\subseteq)\to (\mathbb{N}\cup \{0\})$, defined by  \[\TC(X_{\bullet})([a,b])=\TC_G(f^b_a:X_a\to X_b).\] Here $\TC(f)$ denote the topological complexity of a map $f$ introduced in \cite{Sc}.
\end{definition}

\begin{definition}
    Let $(\mathcal{P},\leq)$ denote the poset category. A \emph{persistent object} in a category $\mathcal{C}$ is a functor $X\colon \mathcal{P}\to \mathcal{C}$. 

    Thus for each object $a$ of $\mathcal{P}$, we have an object $X_a$ in $\mathcal{C}$ and for every object $a, b$ in $\mathcal{C}$ with $a\leq b$, there is a morphism $f^b_a\colon X_a\to X_b$ such that $f^a_a=\id_{X_a}$ and $f^a_c= f^b_c\circ f^a_b$ for every $a\leq b\leq c$.
\end{definition}
This invariants are closely related to various cohomological notions.
\begin{definition}
The \emph{Lusternik--Schnirelmann category} of a persistent space $X_\bullet\in \mathrm{Top}^{(\mathbb{R},\leq)}$ is the functor
\(\cat(X_\bullet)\colon
(\mathscr{Int},\subseteq)\longrightarrow
(\mathbb{N}\cup\{\infty\},\leq)\), defined by  \[\cat(X_{\bullet})([s,t])= \cat_G(f^s_t\colon X_s\to X_t), 
\]
where $\cat(f)$ denotes the LS category of the map $f$ (see \cite{DK23}).   
\end{definition}
\begin{definition}
    The \emph{persistent zero-divisor cup length} of a persistent space $X_{\bullet}\in \mathrm{Top}^{(\mathbb{R},\leq )}$ is the functor 
    \(\zcl(X_{\bullet}):(\mathscr{Int},\subseteq)\to (\mathbb{N}\cup \{\infty\})\),  defined by  \[\zcl(X_{\bullet})([a,b])=\zcl\big((f^b_a\times f^b_a)^{\ast}:\ker{(\Delta_{X_a})^{\ast}} \to \ker{(\Delta_{X_b})^{\ast}}\big).\]
\end{definition}
\subsection{Equivariant persistent theory}
To incorporate the symmetries arising from a $G$-action into persistent topological data, we first recall the Borel cohomology of $G$-spaces and then describe how it naturally extends to a filtered $G$-space.

We begin by recalling Borel cohomology. Fix a topological group $G$ and let $X$ be a $G$-space and $EG\to BG$ be the universal principal $G$-bundle. Then the Borel construction $X_G^h: =EG\times_G X$ of $X$ is given by the quotient of the space $EG\times X$ under the diagonal $G$-action given by $(e,x). g= (e.g, g^{-1}.x)$.

The Borel equivariant cohomology of $X$ with coefficients in a commutative ring $R$ is defined by
\[H_G^{\ast}(X;R):= H^*(X_G^h;R).\]

If $f\colon X\to Y$ is a $G$-equivariant map, then it induces a map
\[
f_G^h\colon X_G^h\longrightarrow Y_G^h,\qquad
[e,x]\longmapsto [e,f(x)].
\]
Consequently, $f$ induces a homomorphism $f^*\colon H_G^{\ast}(Y;R)\longrightarrow H_G^{\ast}(X;R)$.

A filtraion of $G$-spaces $\{X_r\}_{r\in \mathbb{R}}$ is a functor from $(\mathbb{R},\leq)$ to the category of $G$-spaces. Now applying the cohomology functor $H_G^{\ast}(-,\mathbb{F})$ to this filtered $G$-space, we obtain equivariant cohomology groups $\{H_G^{\ast}(Y_r;\mathbb{F})\}_{r\in \mathbb{R}}$ and morphisms $\{h_{r,s}^*\colon H_G^{\ast}(X_s;\mathbb{F})\to H_G^{\ast}(X_r;\mathbb{F})\}_{r\leq s}$ satisfying 
\[h_{r,r}^*= \id_{H_G^{\ast}(X_r;\mathbb{F})} \text{ and } h_{s,t}^*\circ h_{r,s}^*= h_{r,t}^* \text{ for all } r\leq s\leq t.\]
This whole data is termed as a persistent equivariant cohomology module associated to the filtered $G$-space $\{X_r\}_{r\in \mathbb{R}}$. For further background on persistent equivariant cohomology, we refer the reader to \cite{adams2024persistent, MR2854319, MR2121296}.

We now recall the categorical framework to study persistence in the equivariant setting. We begin with $G$-persistent objects, $G$-interleaving distance, together with the associated $G$-Gromov-Hausdorff distance.
\begin{definition}
    \emph{Translation} on a preordered set $(P,\leq)$ is a functor $\Gamma\colon P\to P$ with a natural transformation $\eta_{P} \colon I_P\Rightarrow \Gamma$.
\end{definition}
Let $\textbf{Trans}_P$ denote the monoid of translations.

\begin{definition}
    Let $G$ be a group and $\mathcal{C}$ be a category. We define $\mathcal{C}^G: = Func(BG,\mathcal{C})$, where $BG$ is the one-object category whose automorphism group is $G$.
    
    Then clearly the object of $\mathcal{C}^G$ is an object $X\in \mathcal{C}$ with a $G$-action $\rho\colon G\to \mathrm{Aut}(X)$ and morphisms are $G$-equivariant morphism.
\end{definition}
\begin{definition}
    A \emph{persistent $G$-object} in a category $\mathcal{C}$ is a functor $F\colon (P,\leq)\to \mathcal{C}^G$, where $(P,\leq)$ is preorderd set.
\end{definition}

\begin{definition}
    Let $P$ be a preordered set and let $\Gamma, K\in \textbf{Trans}_P$. Suppose $F,G\in \mathcal{C}^G$. Then a $G$-$(\Gamma,K)$-interleaving between $F$ and $G$ consists of natural transformations $\phi\colon F\Rightarrow G\Gamma$ and $\psi\colon G\Rightarrow FK$ such that $(\psi\Gamma)\phi= F\eta_{K\Gamma}$ and $(\phi K)\psi=G \eta_{\Gamma K}$.

    We say that $F, G$ are \emph{$G$-$(\Gamma,K)$-interleaved} if there exists a $G$-$(\Gamma,K)$-interleaving between them. 
    For brevity, a $G$-$(\Gamma,\Gamma)$-interleaving is simply termed $(\Gamma,G)$-interleaved.
\end{definition}
\begin{definition}
    We call a translation $\Gamma$ an \emph{$\epsilon$-translation} if $\omega_{\Gamma}\leq \epsilon$, where $\omega\colon \textbf{Trans}_P\to [0,\infty)$ is sublinear projection (see \cite{BDS}).

    $F,G \in \mathcal{C}^G$ are \emph{$G$-$\epsilon$-interleaved} with respect to $\omega$ if there exist $\epsilon$-translations $\Gamma, K\in \textbf{Trans}_P$ such that $F$ and $G$ are $G$-$(\Gamma,K)$-interleaved.
\end{definition}

\begin{definition}
    The \emph{$G$-interleaving distance} between $F, G\in \mathcal{C}^G$ is 
    \[d_{I}^{G\omega}(F,G):=\mathrm{inf}\{\epsilon\mid F,G \text{ are } G\text{-}\epsilon\text{-interleaved with respect to }\omega\}.\]
\end{definition}
\begin{definition}[{\cite{LM25}}]

The $G$-Gromov-Hausdorff distance between $G$-metric spaces $(X,d_X^G)$ and $(Y,d_Y^G)$ is defined by $$d_{GH}^G(X,Y)=\frac{1}{2}\inf_{C\in \Gamma_G(X,Y)} \sup_{(x,y),(x',y')\in C} |d_X^G(x,x')-d_Y^G(y,y')|,$$
    where $\Gamma_G(X,Y)$ denotes the set of $G$-equivariant correspondence between $X$ and $Y$, that is, the set consisting all $G$-invariant subsets of $X\times Y$.
    \end{definition}

\subsection{Equivariant versions of topological complexity and LS category of equivariant maps}
The equivariant topological complexity and equivariant LS category of equivariant maps were introduced in \cite{daundkar2026equivariant} and \cite{daundkar2025equivariant}, respectively. We recall their definitions and the properties that will be used in this article.

\begin{definition}
    The equivariant topological complexity of a $G$-map $f\colon X\to Y$, denoted by $\TC_G(f)$, is the smallest non-negative integer $n$ for which there is a $G$-invariant open cover $\{U_0,\dots,U_n\}$ of $X\times X$, such that for each $i=0,\dots,n$, there is a $G$-map $s_i\colon U_i\to Y^I$ satisfying $s_i(x_0,x_1)(0)=f(x_0)$ and $s_i(x_0,x_1)(1)=f(x_1)$. If no such $n$ exists, we define $\TC_G(f)=\infty$.
\end{definition}

\begin{definition}
Let $f\colon X \to Y$ be a $G$-map.  
The \emph{equivariant LS category} of $f$, denoted $\cat_G(f)$, is the least integer $n\geq 0$ such that $X$ admits a cover $\{U_i\}_{i=0}^n$ by $n+1$ $G$-invariant open subsets with the property that, for each $i$, the restriction $f|_{U_i}$ is $G$-homotopic to a $G$-map $U_i\to Y$ whose image lies in the orbit of some $y_i\in Y$.  
If no such $n$ exists, we set $\cat_G(f)=\infty$.
\end{definition}

The topological complexity of a map admits a useful cohomological
lower bound in terms of the zero-divisor cup-length. For maps, this
cohomological invariant was introduced by Scott in \cite{Sc}. In the
equivariant setting, its analogue was introduced in
\cite{daundkar2026equivariant}, providing a corresponding lower bound
for the equivariant topological complexity. We recall its definition
below.
\begin{definition}
    The $G$-equivariant zero divisor cup length of a $G$-map $f\colon X\to Y$, denoted by $\zcl_G(f)$, is the largest non-negative integer $n$ for which there exists cohomology classes $\alpha_1,\dots,\alpha_k\in H_G^{\ast}(Y\times Y)$ satisfying $\Delta_Y^*(\alpha_i)=0$ for each $i=0,\dots,n$ and $(f\times f)^*(\alpha_1\cup \alpha_2\cdots\cup \alpha)\neq 0$. Here $\Delta_Y\colon Y\to Y\times Y$ denotes the diagonal map. In other words,
    \[\zcl_G(f):= \nil\big(\mathrm{im}(\ker(\Delta_Y)_G^{h*}\xrightarrow{(f\times f)^*} \ker (\Delta_X)_G^{h*})\big).\]
\end{definition}

Now, we recall some properties $\TC_G(f)$, which will be useful later.
\begin{lemma}[{\cite{daundkar2026equivariant}}]\label{properties_of_tc_of_maps}
Let $f, f_1, f_2\colon X\to Y$ and $g\colon Y\to Z$ be $G$-maps.
    \begin{enumerate}
        \item $\TC_G(\id_X)=\TC_G(X)$.
        \item $\TC_G(f)\leq \mathrm{min}\big\{ \TC_G(X), \TC_G(Y)\big\}$.
        \item Let $H$ and $H'$ be closed subgroups of $G$ such that $X^H, Y^H$ are $H'$-invariant. Then $\TC_{H'}(f^H)\leq \TC_G(f)$.
        \item $\cat_H(f)\leq \TC_G(f)$, if $H$ is a stablizer of some point in $X$.
        \item $\TC_G(f)\leq \cat_G(f\times f)$, if $Y$ is $G$-connected.
        \item $\TC_G(g\circ f)\leq \mathrm{min}\big\{\TC_G(f),\TC_G(g)\big\}$.
        \item $\TC_G(f_1)=\TC_G(f_2)$, if $f_1$ is $G$-homotopic to $f_2$.
    \end{enumerate}
\end{lemma}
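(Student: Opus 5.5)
We verify the seven items in turn, working throughout with the definition of $\TC_G(f)$ in terms of $G$-invariant open covers of $X\times X$ carrying $G$-equivariant local sections $s_i\colon U_i\to Y^I$ with endpoints $f(x_0)$ and $f(x_1)$. In each item we transform a given cover with sections into a new cover with sections, and check that $G$-invariance and $G$-equivariance survive the transformation.

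Item (1) is immediate, since with $f=\id_X$ the defining conditions are exactly those of $\TC_G(X)$. For (6), start from a cover $\{U_i\}$ of $X\times X$ for $f$ and compose its sections with $g^I\colon Y^I\to Z^I$. This gives sections for $g\circ f$, and $g^I$ is equivariant because $G$ acts pointwise on path spaces. Hence $\TC_G(g\circ f)\le \TC_G(f)$. For the bound by $\TC_G(g)$, pull a cover $\{V_j\}$ of $Y\times Y$ back along $f\times f$ and precompose its sections with $f\times f$.

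Item (2) then follows from (1) and (6), writing $f=f\circ \id_X=\id_Y\circ f$. For (7), let $H\colon X\times I\to Y$ be a $G$-homotopy from $f_1$ to $f_2$. Given a section $s$ for $f_1$, concatenate the paths
\[
\overline{H(x_0,-)}\ast s(x_0,x_1)\ast H(x_1,-).
\]
This is continuous and equivariant, so it is a section for $f_2$, and the inequality is symmetric in $f_1,f_2$.

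For (3), restrict a cover $\{U_i\}$ to $X^H\times X^H$ and take the sections $s_i|$. Paths in $Y$ with $H$-fixed endpoints need not lie in $Y^H$, and this is where equivariance is used. If $(x_0,x_1)\in X^H\times X^H$ and $h\in H$, then $s_i(x_0,x_1)=s_i(hx_0,hx_1)=h\cdot s_i(x_0,x_1)$, so the path is $H$-fixed and lies in $(Y^H)^I$. The restricted sections are $H'$-equivariant, giving $\TC_{H'}(f^H)\le\TC_G(f)$.

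For (4), let $x_0$ be a point with stabilizer $H$ and restrict the cover to $X\cong X\times\{x_0\}$; this restriction is $H$-invariant. The section then yields an $H$-homotopy from $f|_{U_i}$ to the constant map at $f(x_0)$, and the constant map lies in an orbit. This argument is the one most likely to need care, namely checking that $\cat_H$ as defined (homotopy into an orbit) is matched.

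For (5), take $f\times f$-categorical sets $U_i$ on which $f\times f$ is $G$-homotopic into the orbit of $(y_i,y_i')$. Using $G$-connectedness of $Y$, join $y_i$ to $y_i'$ by a path inside the fixed-point set of the relevant stabilizer. Concatenating gives equivariant sections.

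The main obstacle is (5). One must ensure that the connecting path can be chosen compatibly with the isotropy of the orbit points, so that the result is well defined on whole orbits. Since this lemma is quoted from \cite{daundkar2026equivariant}, the final plan is to present (1), (2), (6), (7) in full and to defer (3)–(5) to that reference with the sketches above.
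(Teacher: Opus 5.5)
Your proposal is correct, but it follows a different route from the paper, which gives no argument for this lemma at all: it is imported verbatim from \cite{daundkar2026equivariant}. Your direct verification from the open-cover definition of $\TC_G(f)$ is therefore a self-contained alternative, not a reconstruction of anything in the text.

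What your route buys is transparency. Items (1), (2), (6) and (7) come out as formal consequences of the definition:
functoriality of $Y^I$ in $Y$, pullback of covers along $f\times f$, and concatenation with the tracks of a $G$-homotopy. Items (3)--(5) are exactly where equivariance of the sections does real work.

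Your sketches of (3) and (4) are already complete proofs, so there is no need to defer them. In (4), the constant map at $f(x_0)$ is an $H$-map because $f(x_0)$ is fixed by $H=G_{x_0}$; this also follows from the homotopy being $H$-equivariant.

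For (5) you have identified the right fix. To finish it, proceed as follows.
\begin{itemize}
\item Let $F=(F_0,F_1)$ be the $G$-homotopy from $(f\times f)|_{U_i}$ to $c_i$, with $c_i(U_i)\subseteq G\cdot(y_i,y_i')$.
\item Put $K_i=G_{y_i}\cap G_{y_i'}$, which is closed, and choose a path $\gamma_i$ in $Y^{K_i}$ from $y_i$ to $y_i'$. This is possible because $Y$ is $G$-connected.
\item Set $\Phi_i(g\cdot(y_i,y_i'))=g\cdot\gamma_i$. This is well defined precisely because $\gamma_i$ is $K_i$-fixed.
\item Take
\[
s_i(x_0,x_1)=F_0(x_0,x_1,-)\ast\Phi_i\bigl(c_i(x_0,x_1)\bigr)\ast\overline{F_1(x_0,x_1,-)}.
\]
\end{itemize}

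You should state one hypothesis explicitly. Continuity of $\Phi_i$, and hence of $s_i$, uses that the orbit map $G/K_i\to G\cdot(y_i,y_i')$ is a homeomorphism. This holds, for example, for compact $G$ acting on Hausdorff spaces, which is the standing assumption in \cite{Colman_Grant_Eqtc} and in the cited work.
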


\section{Equivariant persistence LS category}
To include symmetry in persistent topological complexity, it is natural to consider an equivariant version of classical categorical invariants. In this section, we introduce the $G$-persistent Lusternik--Schnirelmann category of a $G$-persistent space. We then compute this invariant for several natural examples of $G$-persistent spaces.
\begin{definition}\label{LS_cat}
Let $X_\bullet\colon (\mathbb{R}_{\geq 0},\leq)\to \mathcal{C}^G$ be a persistent $G$-space. 
The \emph{$G$-persistent Lusternik--Schnirelmann category} of $X_\bullet$ is the functor
\(\cat_G(X_\bullet)\colon
(\mathscr{Int},\subseteq)\longrightarrow
(\mathbb{N}\cup\{\infty\},\leq)\), defined by  \[\cat_G(X_{\bullet})([s,t])= \cat_G(f^s_t\colon X_s\to X_t), 
\]
where $\cat_G(f)$ denotes the equivariant LS category of the $G$-map $f$, introduced in \cite[Definition 4.2]{daundkar2025equivariant}.
\end{definition}

\begin{definition}
Let $f:X\ra Y$ be a $G$-map and $R$ be a commutative ring. Then the \emph{$G$-cup length} of $f$ id defined by ${\cl}_G(f;R):=\nil((f)_G^{h\ast}(\tilde H_G^\ast(Y))$.
\end{definition}

\begin{definition}\label{cup_length}
    Let $X_\bullet$ be a persistent $G$-space.
A \emph{$G$-persistent zero-divisor-cup-length} is the functor 
\(\cl_G(X_{\bullet};R)\colon (\mathscr{Int},\subseteq)\to (\mathbb{N}\cup \{\infty\}), \) defined by  \[\cl_G(X_{\bullet};R)([a,b])=\cl_G(f_a^b;R).\]
\end{definition}

\begin{definition}\label{G Categorical Invariant}
Let $\mathcal C^G$ be the category of $G$-objects in $\mathcal C$. A \emph{$G$-categorical invariant} is a map $\mathbf{I}^G: \operatorname{Ob}(\mathcal C^G)\sqcup \operatorname{Mor}(\mathcal C^G) \longrightarrow (\mathbb R_{\geq 0},\leq)$ such that 
\begin{enumerate}
    \item $\mathbf{I}^G(\id_X)=\mathbf{I}^G(X)$ for every
    $X\in\operatorname{Ob}(\mathcal C^G)$;
    
    \item for any composable $G$-equivariant maps $X\xrightarrow{f}Y\xrightarrow{g}Z\xrightarrow{h}W$
    in $\mathcal C^G$, one has
    \[
    \mathbf{I}^G(h\circ g\circ f)\leq \mathbf{I}^G(g).
    \]
\end{enumerate}
\end{definition}
\begin{lemma}\label{lem: composition ineq}
    The $G$-cup length $\cl_G(-)$ satisfies
    \begin{enumerate}
        \item $\cl_G(\id_X)=\cl_G(X)$.
        \item If $f\colon X\to Y, g\colon Y\to Z$ and $h\colon Z\to W$ be $G$-maps, then $\cl_{G}(h\circ g\circ f)\leq \cl_G(g)$.
    \end{enumerate}
\end{lemma}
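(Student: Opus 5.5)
Both parts follow from unwinding the definition $\cl_G(f;R)=\nil\big(f_G^{h\ast}(\tilde H_G^\ast(Y))\big)$, where $\nil$ of a subset $A$ of a ring is the largest $n$ such that some product of $n$ elements of $A$ is nonzero. I will use only functoriality of the Borel construction and the fact that a ring homomorphism preserves cup products. I first fix the convention $\cl_G(X):=\nil(\tilde H_G^\ast(X))$, where $\tilde H_G^\ast$ is the positive-degree part (or the augmentation ideal) of $H_G^\ast$.

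For part (1), the Borel construction is a functor, so $(\id_X)_G^h=\id_{X_G^h}$. Hence $(\id_X)_G^{h\ast}$ is the identity on $H_G^\ast(X)$, the image of $\tilde H_G^\ast(X)$ is $\tilde H_G^\ast(X)$ itself, and the nilpotency indices coincide.

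For part (2), functoriality again gives $(h\circ g\circ f)_G^h=h_G^h\circ g_G^h\circ f_G^h$, so
\[(h\circ g\circ f)_G^{h\ast}=f_G^{h\ast}\circ g_G^{h\ast}\circ h_G^{h\ast}.\]
I will take the following steps in order.
\begin{enumerate}
\item Since $h_G^{h\ast}$ is a graded ring map, it preserves the positive-degree part, so $h_G^{h\ast}(\tilde H_G^\ast(W))\subseteq \tilde H_G^\ast(Z)$. Consequently $\mathrm{im}\big((h g f)_G^{h\ast}|_{\tilde H_G^\ast(W)}\big)\subseteq f_G^{h\ast}\big(g_G^{h\ast}(\tilde H_G^\ast(Z))\big)$.
\item Suppose $\alpha_1\cup\cdots\cup\alpha_n\neq 0$ with $\alpha_i=f_G^{h\ast}(g_G^{h\ast}(\beta_i))$ and $\beta_i\in\tilde H_G^\ast(Z)$. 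Since $f_G^{h\ast}$ is multiplicative, $\alpha_1\cup\cdots\cup\alpha_n=f_G^{h\ast}\big(g_G^{h\ast}(\beta_1)\cup\cdots\cup g_G^{h\ast}(\beta_n)\big)$.
\item A ring homomorphism sends $0$ to $0$, so the product $g_G^{h\ast}(\beta_1)\cup\cdots\cup g_G^{h\ast}(\beta_n)$ must be nonzero. Each factor lies in $g_G^{h\ast}(\tilde H_G^\ast(Z))$, so this product witnesses $\cl_G(g)\geq n$.
\end{enumerate}
Taking the supremum over $n$ gives $\cl_G(hgf)\leq\cl_G(g)$, and the case $\infty$ is handled by the same argument.

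The only delicate point is step (1): one must check that pulling back along $h$ keeps classes inside the reduced part. Where $\tilde H_G^\ast$ is the positive-degree part this is immediate from degree preservation. If instead $\tilde H_G^\ast$ is defined as the kernel of restriction to a fibre or orbit $G/G_{w}$, I would choose base orbits compatibly ($z=h(g(f(x)))$-type choices) and use naturality of the restriction maps. Everything else is formal.
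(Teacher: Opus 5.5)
Your proof is correct and is essentially the paper's argument. Both use functoriality of the Borel construction and then pull the product back through the multiplicative map $f_G^{h\ast}$, which sends $0$ to $0$. The paper argues directly that any $k+1$ classes in the image have vanishing product, whereas you argue by the contrapositive. Your explicit check that $h_G^{h\ast}$ maps the reduced part of $H_G^\ast(W)$ into that of $H_G^\ast(Z)$ is a step the paper uses without comment.
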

\begin{proof}
    We note that $(\id_X)_G^{h*}(\tilde{H}_G^*(X;R))= \tilde{H}_G^*(X;R)$. Hence $\cl_G(\id_X;R)=\nil(\tilde{H}_G^*(X;R))=\cl_G(X)$.

    Suppose $X\xrightarrow{f}Y\xrightarrow{g}Z\xrightarrow{h} W$ be $G$-equivariant maps and $\cl_G(g;R)=k$. Then $(h\circ g\circ f)_G^{h*}= f_G^{h*}\circ g_G^{h*}\circ h_G^{h*}$. Hence $(h\circ g\circ f)_G^{h*}(\tilde{H}_G^{h*}(W;R))$. Let $u_1,\ldots,u_{k+1} \in (h\circ g\circ f)_G^{h*} (\tilde H_G^*(W;R))$. Hence, for each $i$, there exists $a_i\in g_G^{h*}(\tilde H_G^*(Z;R))$ such that $u_i=f_G^{h*}(a_i)$ and so $u_1\cup\cdots\cup u_{k+1}= f_G^{h*}(a_1\cup\cdots\cup a_{k+1})$. Since $\cl_G(g;R)=k$, we have $a_1\cup\cdots\cup a_{k+1}=0$. Thus $u_1\cup\cdots\cup u_{k+1}=0$. This gives $\cl_G(h\circ g\circ f;R) \leq k.$
\end{proof}

\begin{lemma}\label{lem: composition ineq for cat}
 Let $f: X\to Y$ and $g:Y\to Z$ be $G$-maps between $G$-spaces. Then 
 $$\cat_G(g\circ f)\leq min\{\cat_G(f),\cat_G(g)\}.$$
\end{lemma}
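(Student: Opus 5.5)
We prove the two inequalities $\cat_G(g\circ f)\leq \cat_G(f)$ and $\cat_G(g\circ f)\leq \cat_G(g)$ separately, working directly from the definition of $\cat_G$ for $G$-maps. In each case we start from an optimal categorical cover for one of the maps and turn it into a categorical cover of the same size for $g\circ f$. The infinite cases are trivial, so we assume the relevant invariant is a finite integer $n$.

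\emph{The bound $\cat_G(g\circ f)\leq \cat_G(f)$.} Suppose $\cat_G(f)=n$, realised by a $G$-invariant open cover $\{U_0,\dots,U_n\}$ of $X$. For each $i$ there is a $G$-homotopy $H_i\colon U_i\times I\to Y$ from $f|_{U_i}$ to a $G$-map $k_i$ with image in the orbit $G\cdot y_i$. We post-compose with $g$ to get $g\circ H_i$. This is again $G$-equivariant, since $g$ is a $G$-map. It is a $G$-homotopy from $(g\circ f)|_{U_i}$ to $g\circ k_i$. The image of $g\circ k_i$ lies in $g(G\cdot y_i)=G\cdot g(y_i)$, an orbit in $Z$, again by equivariance. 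Hence the same cover witnesses $\cat_G(g\circ f)\leq n$.

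\emph{The bound $\cat_G(g\circ f)\leq \cat_G(g)$.} Suppose $\cat_G(g)=n$, realised by a $G$-invariant open cover $\{V_0,\dots,V_n\}$ of $Y$. We have $G$-homotopies $K_j\colon V_j\times I\to Z$ from $g|_{V_j}$ to maps with image in $G\cdot z_j$. We pull back and set $U_j:=f^{-1}(V_j)$. These sets are open by continuity and $G$-invariant because $f$ is equivariant: if $f(x)\in V_j$ then $f(gx)=gf(x)\in V_j$. They cover $X$. The maps $K_j\circ(f|_{U_j}\times\id_I)$ are $G$-homotopies from $(g\circ f)|_{U_j}$ to maps with image in $G\cdot z_j$. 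This gives $\cat_G(g\circ f)\leq n$.

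Combining the two bounds yields the minimum. There is no substantial obstacle. The only point needing care is checking that equivariance is preserved under composition and that orbits map onto orbits, $g(G\cdot y)=G\cdot g(y)$, which holds for any $G$-map.
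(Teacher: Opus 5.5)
Your proposal is correct and follows the paper's argument: for $\cat_G(g\circ f)\leq \cat_G(f)$ you post-compose the $G$-homotopies with $g$, which the paper simply calls obvious and you spell out via $g(G\cdot y)=G\cdot g(y)$. For $\cat_G(g\circ f)\leq \cat_G(g)$ you pull back the $G$-categorical cover of $Y$ along $f$, exactly as the paper does.
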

\begin{proof}
 It is easy to see that $\cat_G(\mathrm{id}_X)=\cat_G(X)$. Now we show that $\cat_G(g\circ f)\leq min\{\cat_G(f),\cat_G(g)\}$. The inequality $\cat_G(g\circ f)\leq \cat_G(f)$ is obvious. To prove $\cat_G(g\circ f)\leq \cat_G(g)$, consider a $G$-invariant open set $V\subseteq Y$ such that $g|_{V}\simeq_G h$, where $h$ is a $G$-map taking values in an orbit of some point. Let $U=f^{-1}(V)$. Then $(g\circ f)|_{U}=g|_{V}\circ f|_{U}\simeq_G h\circ f|_{U}$. Note that since $h$ takes values in some orbit, $h\circ f|_{U}$ also takes values in the same orbit. This proves the desired inequality.    
\end{proof}

\begin{proposition}\label{Categorical invariance}
   $\cl_G,\cat_G\colon\mathrm{Ob}(\mathrm{Top}^G)\sqcup \mathrm{Mor}(\mathrm{Top}^G)\ra \mathbb N\cup\{\infty\}$ are $G$-categorical invariants. 
\end{proposition}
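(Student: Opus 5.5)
The proof amounts to checking the two axioms of Definition~\ref{G Categorical Invariant} for each of $\cl_G$ and $\cat_G$, using Lemmas~\ref{lem: composition ineq} and~\ref{lem: composition ineq for cat}.

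For $\cl_G$ there is nothing further to do. Lemma~\ref{lem: composition ineq}(1) gives $\cl_G(\id_X)=\cl_G(X)$, which is axiom (1). Lemma~\ref{lem: composition ineq}(2) gives $\cl_G(h\circ g\circ f)\leq \cl_G(g)$ for composable $G$-maps $X\xrightarrow{f}Y\xrightarrow{g}Z\xrightarrow{h}W$, which is axiom (2) verbatim.

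For $\cat_G$, axiom (1) is the identity $\cat_G(\id_X)=\cat_G(X)$ noted at the start of the proof of Lemma~\ref{lem: composition ineq for cat}. If needed, it is checked directly: a $G$-invariant open set $U$ on which $\id_X|_U$ is $G$-homotopic into an orbit is exactly a $G$-categorical open set of $X$, so the defining covers coincide. For axiom (2), I apply Lemma~\ref{lem: composition ineq for cat} twice.
\begin{enumerate}
\item Applying it to the pair $(g\circ f,h)$ gives $\cat_G(h\circ(g\circ f))\leq \cat_G(g\circ f)$.
\item Applying it to the pair $(f,g)$ gives $\cat_G(g\circ f)\leq \cat_G(g)$.
\end{enumerate}
Chaining these yields $\cat_G(h\circ g\circ f)\leq \cat_G(g)$.

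The only point needing care is that the two bounds in Lemma~\ref{lem: composition ineq for cat} are used in opposite directions: precomposition does not increase $\cat_G$, and postcomposition does not increase it either. The bound $\cat_G(g\circ f)\leq\cat_G(f)$, which covers postcomposition, is called ``obvious'' in that lemma. If it needs justification, take a $G$-homotopy $f|_U\simeq_G k$ with $k$ landing in an orbit $G\cdot y$, and compose with $g$. This gives $(g\circ f)|_U\simeq_G g\circ k$, and $g\circ k$ lands in $G\cdot g(y)$ because $g$ is equivariant.

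Finally, the codomain $\mathbb N\cup\{\infty\}$ is compatible with the ordered codomain in Definition~\ref{G Categorical Invariant}, with the inequalities read in the extended sense. This completes the verification.
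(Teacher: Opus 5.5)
Your proposal is correct and follows the same approach as the paper: the paper's proof simply cites Lemma~\ref{lem: composition ineq} and Lemma~\ref{lem: composition ineq for cat}. You spell out what that citation leaves implicit, namely chaining the two bounds of the $\cat_G$ lemma to get $\cat_G(h\circ g\circ f)\leq\cat_G(g)$, and justifying the ``obvious'' postcomposition bound via $g(G\cdot y)=G\cdot g(y)$.
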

\begin{proof}
 Follows from Lemma \ref{lem: composition ineq} and Lemma \ref{lem: composition ineq for cat}.
\end{proof}

We now compute these invariants for several standard $G$-spaces
and group actions. We first consider examples arising from
Vietoris--Rips filtrations.

\begin{remark}
    Let $(X,d)$ be a metric space equipped with an action of a group $G$ by isometries. For $r>0$, the Vietoris-Rips complex $\vr_{\leq r}(X)$ admits a natural simplicial $G$-action defined on simplices by
\[
g\cdot\{x_0,\ldots,x_k\} = \{g x_0,\ldots,g x_k\}, \text{ for all }g\in G.
\]
Moreover, $d(g\,x, g\,y)=g(x,y)$ for all $x,y\in X$ and $g\in G$. 
Thus $\vr_{\bullet}(X)$ is naturally a $G$-persistent space.
\end{remark}

We first compute the equivariant persistent LS category for the reflection action on a sphere.
\begin{proposition}
    Suppose $\mathbb{Z}_2$ acts on $\mathbb{S}^n$ by reflection in the last coordinate. Then 
    \[\cat_{\mathbb{Z}_2}(\vr_{\bullet}(\mathbb{S}^n))(J)=
    \begin{cases}
    1,& \text{if } J\subset  \big(0,\mathrm{arccos}(-\frac{1}{n+1})\big),\\
        0, & \text{if } J\not\subset [0,\pi].
    \end{cases}\]
\end{proposition}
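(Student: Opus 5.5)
The plan is to write $J=[s,t]$ and treat the two cases separately. Throughout, $\mathbb{S}^n$ carries the geodesic metric, on which the reflection $\tau(x_1,\dots,x_{n+1})=(x_1,\dots,-x_{n+1})$ acts by isometries. Let $f=f_s^t\colon \vr_s(\mathbb{S}^n)\to\vr_t(\mathbb{S}^n)$ be the inclusion, which is $\mathbb{Z}_2$-equivariant by the Remark above.

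\emph{Case $J\not\subset[0,\pi]$.} Here $t>\pi$. Since $\pi$ is the diameter of $\mathbb{S}^n$, $\vr_t(\mathbb{S}^n)$ is the full simplex on the vertex set $\mathbb{S}^n$. I will pick a point $p$ on the equator $\{x_{n+1}=0\}$, which is fixed by $\tau$. The straight-line homotopy $H(x,\lambda)=(1-\lambda)x+\lambda p$ inside the full simplex is $\mathbb{Z}_2$-equivariant, because $\tau$ acts simplicially and fixes $p$. Hence $f\simeq_{\mathbb{Z}_2}$ the constant map at the fixed point $p$, whose image is the orbit $\{p\}$. With the single open set $U_0=\vr_s(\mathbb{S}^n)$ this gives $\cat_{\mathbb{Z}_2}(f)=0$.

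\emph{Case $J\subset(0,\arccos(-\tfrac1{n+1}))$.} I will use the theorem of Adamaszek--Adams--Frick: for $0<r<\arccos(-\tfrac1{n+1})$ there is a homotopy equivalence $\vr_r(\mathbb{S}^n)\simeq\mathbb{S}^n$, and for $0<s\le t$ in this range the inclusions $f_s^t$ are homotopy equivalences. The key point is that the equivalence can be taken to be the spherical barycentre (Karcher mean) map $\vr_r(\mathbb{S}^n)\to\mathbb{S}^n$, which is well defined at these scales. This map is natural with respect to isometries, so it is $\mathbb{Z}_2$-equivariant and commutes with the inclusions.

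\emph{Lower bound.} Suppose $\cat_{\mathbb{Z}_2}(f)=0$. Then $f$ is homotopic to a map into a single orbit, which is a discrete set of one or two points. Such a map induces zero on $\tilde H_n$. But $f$ induces an isomorphism $H_n(\vr_s)\cong H_n(\vr_t)\cong\Z$. This contradiction gives $\cat_{\mathbb{Z}_2}(f)\ge 1$. Only non-equivariant information is used here.

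\emph{Upper bound.} By Lemma \ref{lem: composition ineq for cat}, $\cat_{\mathbb{Z}_2}(f)\le\cat_{\mathbb{Z}_2}(\vr_t(\mathbb{S}^n))$, so it suffices to produce a cover of $\vr_t(\mathbb{S}^n)$ by two invariant open sets, each $\mathbb{Z}_2$-deformable into an orbit. On the sphere itself I take $U_0=\mathbb{S}^n\setminus\{e_1\}$ and $U_1=\mathbb{S}^n\setminus\{-e_1\}$. The points $\pm e_1$ lie on the equator and are fixed, so both sets are invariant. Each is equivariantly contractible to $\mp e_1$ by stereographic projection from $\pm e_1$ followed by radial contraction, since $\tau$ acts linearly there. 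So $\cat_{\mathbb{Z}_2}(\mathbb{S}^n)\le 1$. I then pull this cover back to $\vr_t(\mathbb{S}^n)$ along the equivariant barycentre map $\beta$.

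\emph{Main obstacle.} Pulling back the cover only gives $\cat_{\mathbb{Z}_2}(\beta)\le 1$, not $\cat_{\mathbb{Z}_2}(\vr_t(\mathbb{S}^n))\le 1$. To transfer the bound I need a $\mathbb{Z}_2$-homotopy inverse to $\beta$, or at least a $\mathbb{Z}_2$-map $\iota\colon\mathbb{S}^n\to\vr_t(\mathbb{S}^n)$ with $f\simeq_{\mathbb{Z}_2}\iota\circ\beta|_{\vr_s}$. Given that, Lemma \ref{lem: composition ineq for cat} yields $\cat_{\mathbb{Z}_2}(f)\le\cat_{\mathbb{Z}_2}(\mathbb{S}^n)\le 1$. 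My first attempt is to take the geodesic homotopy from the inclusion to $\iota\circ\beta$, where $\iota$ is an equivariant section obtained by sending a point to a small regular simplex centred at it; equivariance should follow from naturality under isometries. Failing that, I would argue via the equivariant Whitehead theorem: check that $\beta$ restricts to weak equivalences on the fixed-point sets, namely $\vr_r$ of the equator $\mathbb{S}^{n-1}$ versus $\mathbb{S}^{n-1}$, which is the same theorem one dimension down because $\arccos(-\tfrac1n)>\arccos(-\tfrac1{n+1})$.
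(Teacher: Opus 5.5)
There is a genuine gap in the upper bound. Your case $J\not\subset[0,\pi]$ is fine, and so is your lower bound $\cat_{\mathbb{Z}_2}(f_s^t)\geq 1$, which uses $H_n$ and $\beta_t\circ f_s^t=\beta_s$ (like the statement itself, it needs $n\geq 1$).

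The missing step is $\cat_{\mathbb{Z}_2}(f_s^t)\leq 1$. You correctly see that this requires an equivariant comparison of $\vr_t(\mathbb{S}^n)$ with $\mathbb{S}^n$, but you never supply one. The paper imports exactly this from \cite[Theorem 8]{LM25}: $\vr_t(\mathbb{S}^n)\simeq_{\mathbb{Z}_2}\mathbb{S}^n$ for $0<t<\arccos(-\tfrac{1}{n+1})$. With that result, your own reduction $\cat_{\mathbb{Z}_2}(f_s^t)\leq\cat_{\mathbb{Z}_2}(\vr_t(\mathbb{S}^n))=\cat_{\mathbb{Z}_2}(\mathbb{S}^n)\leq 1$ finishes immediately. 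It uses only $\mathbb{Z}_2$-homotopy invariance of $\cat_{\mathbb{Z}_2}$ and needs no naturality in $t$.

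Neither of your two proposed routes produces this comparison as written.

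\emph{First route.} The vertices of $|\vr_t(\mathbb{S}^n)|$ form a discrete set. So sending $x$ to a simplex whose vertices move with $x$ is not continuous. A continuous equivariant $\iota$ must land in a finite subcomplex, so it has to come from a finite $\mathbb{Z}_2$-invariant triangulation of $\mathbb{S}^n$ of small mesh. More seriously, vertices cannot move along geodesics inside a simplicial complex. The natural substitute is the straight-line homotopy from $\sigma$ to $\iota\beta(\sigma)$, which requires every vertex of $\sigma$ to be within $t$ of the vertices near $\beta(\sigma)$. This fails for $n\geq 2$ and $s>\pi/2$:
\begin{itemize}
\item Take $a\neq b$ at distance $s$ from the north pole $N$, with $d(a,b)\leq s$.
\item Their geodesic midpoint lies strictly inside the convex cap of radius $\pi-s<\pi/2$ about the south pole.
\item Hence at a point of the simplex $\{N,a,b\}$ with small weight on $N$, the barycentre is farther than $s$ from $N$.
\end{itemize}
So for $J=[s,s]$ the homotopy leaves $\vr_t(\mathbb{S}^n)$.

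\emph{Second route.} The fixed-point set $|\vr_r(\mathbb{S}^n)|^{\mathbb{Z}_2}$ is not $|\vr_r(\mathbb{S}^{n-1})|$. It consists of all points $\sum\lambda_v v$ with $\lambda_{\tau v}=\lambda_v$, for instance the midpoint of an edge $\{x,\tau x\}$ with $x$ off the equator. So ``the same theorem one dimension down'' does not apply. Showing that $\beta^{\mathbb{Z}_2}$ from this larger space to $\mathbb{S}^{n-1}$ is a weak equivalence is precisely the equivariant content you would still have to prove, or cite.
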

\begin{proof}
By \cite[Theorem 8]{LM25}, $\vr_t(\mathbb{S}^n)\simeq_{\mathbb{Z}_2} \mathbb{S}^n$ for $0<t<\arccos\left(-\frac{1}{n+1}\right)$. Moreover, $\mathrm{cat}_{\Z_2}(S^n)=1$ (see \cite[Example 5.9]{Colman_Grant_Eqtc}). Then the result follows. 
\end{proof}

For the complex projective space with the conjugation action, we have
the following.
\begin{lemma}\label{Cat_cal_3}
    Let $\mathbb{Z}_2$ acts on $\mathbb{CP}^n$ by complex conjugation. Then $\cat_{\mathbb{Z}_2}(\mathbb{CP}^n)=n$.
\end{lemma}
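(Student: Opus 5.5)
Since $\cat_{\mathbb{Z}_2}(\mathbb{CP}^n)$ is $\cat_{\mathbb{Z}_2}(\id_{\mathbb{CP}^n})$, I will prove the equality by establishing matching lower and upper bounds.

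\emph{Lower bound.} I will use $\cat_G(X)\geq \cat(X)$ together with $\cat(\mathbb{CP}^n)=n$. The inequality $\cat_G(X)\geq\cat(X)$ follows directly from the definition. Let $U$ be a $\mathbb{Z}_2$-invariant open set whose inclusion is $\mathbb{Z}_2$-homotopic to a map into a single orbit. The conjugation action has fixed points, namely $\mathbb{RP}^n\subset\mathbb{CP}^n$, and $\mathbb{CP}^n$ is path connected. Any orbit is therefore either a point or a pair of points joined by a path, so forgetting equivariance makes the inclusion of $U$ null-homotopic. Hence a $\mathbb{Z}_2$-categorical cover is an ordinary categorical cover. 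Classically $\cat(\mathbb{CP}^n)=n$, since the cup length is $n$ (from $x^n\neq 0$) and $\dim \mathbb{CP}^n/2=n$ with connectivity $1$. This gives $\cat_{\mathbb{Z}_2}(\mathbb{CP}^n)\geq n$.

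\emph{Upper bound.} I will use the standard cover $U_i=\{[z_0:\dots:z_n]: z_i\neq 0\}$, $i=0,\dots,n$, which has $n+1$ members. Each $U_i$ is invariant under conjugation, because $\bar z_i\neq 0$ exactly when $z_i\neq 0$. On $U_i$ I normalize $z_i=1$ and use the contraction
\[
H_t([z_0:\dots:1:\dots:z_n])=[tz_0:\dots:1:\dots:tz_n].
\]
This homotopy commutes with conjugation, since $t$ is real, so it is a $\mathbb{Z}_2$-homotopy. It deforms $U_i$ to the point $e_i=[0:\dots:1:\dots:0]$, and $e_i$ is a fixed point, so its orbit is a single point. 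Hence each inclusion $U_i\hookrightarrow\mathbb{CP}^n$ is $\mathbb{Z}_2$-homotopic to a map into an orbit. This gives $\cat_{\mathbb{Z}_2}(\mathbb{CP}^n)\leq n$.

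The main point needing care is the lower-bound step, specifically the passage from the equivariant definition to ordinary category. The definition allows the target to be a two-point orbit, not just a point, so I must check that such maps are non-equivariantly null-homotopic. This holds here because $\mathbb{CP}^n$ is path connected. Everything else is explicit.
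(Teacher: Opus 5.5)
Your proof is correct. The upper bound is the same as the paper's: affine charts $U_j$, contracted by real scalars to the fixed point $e_j$, a homotopy that commutes with conjugation.

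The lower bound takes a different route. The paper passes to the fixed-point set. It uses $(\mathbb{CP}^n)^{\mathbb{Z}_2}=\mathbb{RP}^n$ together with the inequality $\cat(X^G)\leq \cat_G(X)$ and $\cat(\mathbb{RP}^n)=n$. That inequality holds because a $G$-homotopy restricts to fixed points, and an orbit meeting $X^G$ must be a single fixed point.

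You instead forget the action. You prove $\cat(X)\leq\cat_G(X)$ directly from the definition and then use the cup length $x^n\neq 0$ in $H^*(\mathbb{CP}^n)$. You correctly flagged the two-point orbits as the step needing care, and your argument handles it. A continuous map $U\to\{y,\bar y\}$ splits $U$ into two disjoint open pieces, so the piece landing on $\bar y$ can be slid to $y$ along a path independently of the other piece.

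What each approach buys:
\begin{itemize}
\item Yours is self-contained. The paper states the fixed-point inequality without proof or citation.
\item The paper's route is the one that genuinely sees the group action. It works without any path-connectedness assumption and can detect equivariant information invisible to the underlying space.
\end{itemize}
Here the two bounds coincide at $n$.
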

\begin{proof}
    We note that $(\mathbb{CP}^n)^{\mathbb{Z}_2}=\mathbb{RP}^n$. This implies that $\cat_{\mathbb{Z}_2}(\mathbb{CP}^n)\geq \cat(\mathbb{RP}^n)=n$.

    Next consider the open sets $U_j=\{[z_0,\dots,z_n]\in \mathbb{CP}^n\mid z_j\neq 0\}$ for $0\leq j\leq n$. Clearly, $\mathbb{CP}^n=\bigcup_{j=0}^n U_j$.

    Let $\phi\colon U_j\to \mathbb{C}^n$ be defined as
    \[\phi_j([z_0,\dots,z_n]):=\big(\frac{z_0}{z_j},\dots, \frac{z_{j-1}}{z_j},\frac{z_{j+1}}{z_j},\dots,\frac{z_n}{z_j}\big).\]

    One can show $\phi_j$ is a homeomorphism with inverse given by \[(w_1,\dots,w_n)\mapsto [w_1,\dots,w_{j-1},1,w_{j+1},\dots, w_n].\]
    Note that under this homeomorphism $p_j=[0,\dots,0,1,0\dots,0]$ is mapped to $(0,\dots,0)$.

    Now consider the homotopy $H_j\colon \mathbb{C}^n\times I\to \mathbb{C}^n$ defined by $$H(w_1,\dots,w_n,t)=\big((1-t)w_1,\dots,(1-t)w_n\big).$$ Note that $H(\bar{w}_1,\dots,\bar{w}_n,t)= \overline{H(w_1,\dots,w_n,t)}$. This gives that the inclusion $i_j\colon U_j\hookrightarrow \mathbb{CP}^n$ is $\mathbb{Z}_2$-homotopic to a map $c_j\colon U_j\to \mathbb{CP}^n$ given by $c_j(x)=p_j$. Also, the image of $c_j$ is contained in the single orbit of $\{p_j\}$. This gives that each $U_j$ is $\mathbb{Z}_2$-categorical. Hence $\cat_{\mathbb{Z}_2}(\mathbb{CP}^n)\leq n$. This completes the proof.
\end{proof}

\begin{remark}
\Cref{Cat_cal_3} is a special case of \cite[Theorem 3.2]{BDS26}, where the result is proved for quasitoric manifolds from a different perspective. $\C P^n$ is an example of a quasitoric manifold. Here we have given a direct proof.
\end{remark}
If $\mathbb{Z}_2$ acts on $\mathbb{S}^n$ by reflection on the last co-ordinate, then $\tilde{H}_{\mathbb{Z}_2}^*(\mathbb{S}^n;\mathbb{Q})=0$. Hence $\cl_{\mathbb{Z}_2}(\mathbb{S}^n;\mathbb{Q})=0$.

\section{Equivariant persistence topological complexity}
We now introduce $G$-persistent topological complexity as the equivariant analogue of persistent topological complexity introduced in \cite{mavsmoli2025persistence} and also show that it is an equivariant categorical invariant. We then compute this invariant for the
Vietoris-Rips filtration of a sphere equipped with a reflection
action. To obtain its lower bound, we introduce another equivariant categorical invariant, namely $G$-persistent zero-divisor-cup-length and establish its relation with $G$-persistent topological complexity. We conclude with several computations of the $G$-persistent zero-divisor-cup-length for some static persistent $G$-spaces.

\begin{definition}\label{per_tc}
Let $X_\bullet$ be a persistent $G$-space.
A \emph{$G$-persistent topological complexity} is the functor 
\(\TC_G(X_{\bullet})\colon (\mathscr{Int},\subseteq)\to (\mathbb{N}\cup \{0\})\),  defined by  \[\TC_G(X_{\bullet})([a,b])=\TC_G(f^b_a:X_a\to X_b).\]
\end{definition}
Now combining Lemma~\ref{properties_of_tc_of_maps}{(3)} with the above definition of equivariant persistent topological complexity, we obtain the following proposition.

\begin{proposition}
Let $X_\bullet$ be a persistent $G$-space, and let $H,H'$ be closed
subgroups of $G$. If $X_a^H,Y_a^H$ are $H'$-invariant for every $a\in\mathbb{R}$,
then
\[
\TC_{H'}(X_\bullet^H)([a,b])
\leq
\TC_G(X_\bullet)([a,b])
\]
for every $[a,b]\in\mathscr{Int}$.
\end{proposition}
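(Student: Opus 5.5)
The plan is to reduce the statement pointwise to Lemma~\ref{properties_of_tc_of_maps}(3), applied to the structure map of the persistent $G$-space at each interval. Here the hypothesis ``$X_a^H, Y_a^H$ are $H'$-invariant'' is read as saying that the fixed-point sets $X_a^H$ are $H'$-invariant for every $a$. The $Y_a$ is presumably a typo, since only one persistent space appears.

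\textbf{Step 1: the fixed-point persistent space.} First I will check that $X_\bullet^H$, given by $a\mapsto X_a^H$, is a persistent $H'$-space. If $x\in X_a^H$ and $h\in H$, then $G$-equivariance of $f_a^b$ gives
\[
h\cdot f_a^b(x)=f_a^b(h\cdot x)=f_a^b(x).
\]
So $f_a^b$ restricts to a map $(f_a^b)^H\colon X_a^H\to X_b^H$. This restriction is $H'$-equivariant, because the $H'$-action on $X_a^H$ is the restricted $G$-action and $f_a^b$ is $G$-equivariant. Functoriality, namely $(f_a^a)^H=\id$ and $(f_b^c)^H\circ(f_a^b)^H=(f_a^c)^H$, is inherited from $X_\bullet$ by restriction.

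\textbf{Step 2: unwinding the definition.} For $[a,b]\in\mathscr{Int}$, Definition~\ref{per_tc} applied to $X_\bullet^H$ gives
\[
\TC_{H'}(X_\bullet^H)([a,b])=\TC_{H'}\big((f_a^b)^H\colon X_a^H\to X_b^H\big).
\]

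\textbf{Step 3: applying the lemma.} Apply Lemma~\ref{properties_of_tc_of_maps}(3) to the $G$-map $f=f_a^b\colon X_a\to X_b$. Its hypothesis is that the fixed sets of source and target are $H'$-invariant, which holds by assumption at both indices $a$ and $b$. The lemma gives $\TC_{H'}((f_a^b)^H)\le\TC_G(f_a^b)$, and the right side is $\TC_G(X_\bullet)([a,b])$.

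The only point needing care is Step 1: verifying that restriction to $H$-fixed points really produces a well-defined persistent $H'$-space, so that the left-hand side makes sense. Once that is settled, the inequality is a direct application of the earlier lemma, one interval at a time.
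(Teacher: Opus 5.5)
Your proof is correct and matches the paper's argument: it obtains the proposition by applying Lemma~\ref{properties_of_tc_of_maps}(3) to each structure map $f_a^b$ and unwinding Definition~\ref{per_tc}. Your Step~1, which checks that $a\mapsto X_a^H$ is a persistent $H'$-space, and your reading of $Y_a^H$ as a typo for $X_b^H$ make explicit what the paper leaves implicit.
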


\begin{proposition}\label{TC_cal}
    Suppose $\mathbb Z_2$ acts on $\mathbb{S}^n$ by reflection with respect to the last coordinate. Then 
    \[\TC_{Z_2}(\vr_{\bullet}(\mathbb{S}^n))(J)=\begin{cases}
     3, & \text{if } n\geq 2 \text{ and } J\subset \big(0,\mathrm{arccos}(-\frac{1}{n+1})\big),\\
        0, & \text{if } J\not\subset [0,\pi].
    \end{cases}\]
\end{proposition}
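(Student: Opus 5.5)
Two things have to be established: the value $3$ on intervals $J\subset\big(0,\arccos(-\tfrac{1}{n+1})\big)$, and the value $0$ on intervals $J\not\subset[0,\pi]$. The plan follows the proof of the analogous $\cat_{\mathbb{Z}_2}$ computation earlier in the paper.

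\textbf{Step 1 (small scales).} For $0<s\le t<\arccos(-\tfrac{1}{n+1})$, \cite[Theorem 8]{LM25} gives $\mathbb{Z}_2$-homotopy equivalences $\vr_s(\mathbb{S}^n)\simeq_{\mathbb{Z}_2}\mathbb{S}^n$ and $\vr_t(\mathbb{S}^n)\simeq_{\mathbb{Z}_2}\mathbb{S}^n$, both realized by the natural maps from the sphere. I would check that these equivalences are compatible with the structure map, so that $f^t_s$ corresponds, up to $\mathbb{Z}_2$-homotopy, to $\id_{\mathbb{S}^n}$. This compatibility is the main point to verify: it uses naturality of the equivalences in \cite{LM25}, which I would take from the way they are constructed (for example via a Hausmann-type map).

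Granting it, I write $f^t_s=\beta\circ\id_{\mathbb{S}^n}\circ\alpha$ up to $\mathbb{Z}_2$-homotopy, with $\alpha,\beta$ equivariant equivalences. Applying Lemma~\ref{properties_of_tc_of_maps}(6) in both directions, with homotopy inverses, together with (7), gives
\[
\TC_{\mathbb{Z}_2}(f^t_s)=\TC_{\mathbb{Z}_2}(\id_{\mathbb{S}^n})=\TC_{\mathbb{Z}_2}(\mathbb{S}^n),
\]
where the last equality is Lemma~\ref{properties_of_tc_of_maps}(1).

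\textbf{Step 2 (the value 3).} It then remains to show $\TC_{\mathbb{Z}_2}(\mathbb{S}^n)=3$ for the reflection action when $n\ge 2$. I would not recompute this but quote the known value from the literature (\cite{Colman_Grant_Eqtc}, or \cite{daundkar2026equivariant}), making sure that the normalization matches. If it must be justified here, the argument has two halves:
\begin{enumerate}
\item \emph{Upper bound.} Lemma~\ref{properties_of_tc_of_maps}(5) gives $\TC_{\mathbb{Z}_2}\le\cat_{\mathbb{Z}_2}(\mathbb{S}^n\times\mathbb{S}^n)$, which product inequalities bound by $2\cdot 1+1=3$.
\item \emph{Lower bound.} For $n\geq2$ the fixed point set is $\mathbb{S}^{n-1}$. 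Lemma~\ref{properties_of_tc_of_maps}(3) gives $\TC_{\mathbb{Z}_2}(\mathbb{S}^n)\ge\TC(\mathbb{S}^{n-1})$, which is $\ge1$. The remaining gap up to $3$ needs a finer argument that mixes the fixed and free strata.
\end{enumerate}
This lower bound is the hardest step, so I would try to cite it rather than reprove it.

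\textbf{Step 3 (large scales).} If $J=[s,t]\not\subset[0,\pi]$, then $t>\pi$, which is the diameter of $\mathbb{S}^n$. Hence $\vr_t(\mathbb{S}^n)$ is the full simplex on $\mathbb{S}^n$, and it is $\mathbb{Z}_2$-contractible via a straight-line homotopy to the fixed vertex $e_1$. Explicitly, a $\mathbb{Z}_2$-invariant path $\gamma(x,y)$ from $x$ to $e_1$ to $y$ is defined on all of $X_s\times X_s$, so the target of $f^t_s$ admits an equivariant motion planner on a single open set. This gives $\TC_{\mathbb{Z}_2}(f^t_s)=0$.
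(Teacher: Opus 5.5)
Your route matches the paper's. You transport $f^t_s$ to $\id_{\mathbb{S}^n}$ via \cite[Theorem 8]{LM25}, import $\TC_{\mathbb{Z}_2}(\mathbb{S}^n)$ from \cite{Colman_Grant_Eqtc}, and use that $\vr_t(\mathbb{S}^n)$ is a full simplex for $t>\pi$. Your Steps 1 and 3 are sound and more careful than the paper. The paper does not check that the equivalences are compatible with the structure maps. It also only says ``contractible'', whereas $\TC_{\mathbb{Z}_2}(f^t_s)=0$ needs an \emph{equivariant} contraction, which your fixed vertex $e_1$ supplies.

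The genuine gap is Step 2, and it cannot be closed as stated. In the paper's normalization, where $\TC_G(f)=n$ means $n+1$ invariant open sets (the convention your Step 3 uses to get $0$), one has $\TC_{\mathbb{Z}_2}(\mathbb{S}^n)=2$, not $3$, for $n\geq 2$.

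\textbf{Lower bound.} Lemma~\ref{properties_of_tc_of_maps}(3) with $H'=\{e\}$ and $H=\{e\}$, resp.\ $H=\mathbb{Z}_2$, gives $\TC_{\mathbb{Z}_2}(\mathbb{S}^n)\geq\max\{\TC(\mathbb{S}^n),\TC(\mathbb{S}^{n-1})\}=2$. This holds because one of $n,n-1$ is even and positive.

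\textbf{Upper bound.} In this normalization the product inequality reads $\cat(X\times Y)\leq\cat(X)+\cat(Y)$, so your estimate through (5) gives $2$. Your ``$2\cdot 1+1$'' is not valid in either convention. You can also see $\TC_{\mathbb{Z}_2}(\mathbb{S}^n)\leq 2$ directly with three invariant sets:
\begin{enumerate}
\item On $\{(x,y): y\neq -x\}$, use shortest geodesics.
\item On $\{(x,y): y\neq x,\ x\neq\pm e_i\}$ for $i=1,2$, go from $x$ to $-x$ along the great semicircle in the direction of $e_i-\langle e_i,x\rangle x$, then geodesically from $-x$ to $y$.
\end{enumerate}
These direction fields are $\mathbb{Z}_2$-equivariant because $e_1,e_2$ are fixed when $n\geq 2$. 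The three sets cover $\mathbb{S}^n\times\mathbb{S}^n$, since $(\pm e_1,\mp e_1)$ lies in the $i=2$ set. So the ``finer argument mixing strata'' you hoped to cite does not exist.

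\textbf{Source of the discrepancy.} The number $3$ in \cite{Colman_Grant_Eqtc} is in the convention that counts open sets; there $\TC(\mathbb{S}^{2k})=3$ and $\cat_{\mathbb{Z}_2}(\mathbb{S}^n)=2$. The paper's proof imports the $3$ unchanged, even though its own $\cat$ computation uses the reduced value $1$ from the same source. So the paper's argument has the same defect. The normalization check you promised would have revealed that the statement is off by one. With a consistent convention it should read $2$ and $0$ (reduced) or $3$ and $1$ (unreduced). Your Steps 1 and 3, together with the two bounds above, prove the reduced version.
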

\begin{proof}
By \cite[Theorem 8]{LM25}, $\vr_t(\mathbb{S}^n)\simeq_{\mathbb{Z}_2} \mathbb{S}^n$ for $0<t<\arccos\left(-\frac{1}{n+1}\right)$. 
On the other hand, for $t>\pi$, every pair of points in $\mathbb{S}^n$
has geodesic distance at most $\pi<t$. Hence $\vr_t(\mathbb{S}^n)$ is the abstract simplex on the vertex set $\mathbb{S}^n$, and therefore $\vr_t(\mathbb{S}^n)\simeq *$. Consequently, if $t\not \in [0,\pi]$, then
$\vr_t(\mathbb{S}^n)$ is contractible.
Combining these observations with
$\TC_{\mathbb{Z}_2}(\mathbb{S}^n)= 3$
for $n\geq 3$ from \cite[Example 5.9]{Colman_Grant_Eqtc}, gives the desired result.
\end{proof}

\begin{definition}\label{zcl}
    Let $X_\bullet$ be a persistent $G$-space.
A \emph{$G$-persistent zero-divisor-cup-length} is the functor 
\(\zcl_G(X_{\bullet};R)\colon (\mathscr{Int},\subseteq)\to (\mathbb{N}\cup \{\infty\}), \) defined by  \[\zcl_G(X_{\bullet};R)([a,b])=\zcl_G(f_a^b;R).\]
\end{definition}
\begin{proposition}\label{Categorical invariance 2}
   $\zcl_G,\TC_G\colon \mathrm{Ob}(\mathrm{Top}^G)\sqcup \mathrm{Mor}(\mathrm{Top}^G)\ra \mathbb N\cup\{\infty\}$ are $G$-categorical invariants. 
\end{proposition}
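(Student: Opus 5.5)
We need to verify the two axioms of Definition~\ref{G Categorical Invariant} for $\zcl_G$ and for $\TC_G$.

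\emph{Identity axiom.} For $\TC_G$ this is exactly Lemma~\ref{properties_of_tc_of_maps}(1), namely $\TC_G(\id_X)=\TC_G(X)$. For $\zcl_G$, note that $(\id_X\times\id_X)^*$ is the identity on $H_G^*(X\times X)$. The image of $\ker(\Delta_X)^{h*}_G$ under it is therefore $\ker(\Delta_X)^{h*}_G$ itself, whose nilpotency is by definition $\zcl_G(X)$.

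\emph{Composition axiom for $\TC_G$.} Take composable $G$-maps $X\xrightarrow{f}Y\xrightarrow{g}Z\xrightarrow{h}W$. We apply Lemma~\ref{properties_of_tc_of_maps}(6) twice:
\[\TC_G(h\circ g\circ f)\leq \TC_G(h\circ g)\leq \TC_G(g).\]
For the first inequality we regard $h\circ g\circ f=(h\circ g)\circ f$ and use $\TC_G(k\circ f)\leq\TC_G(k)$ with $k=h\circ g$. For the second we use $\TC_G(h\circ g)\leq\TC_G(g)$.

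One caveat: (6) is stated for maps out of $X$, but the proof in \cite{daundkar2026equivariant} is general. If one prefers to check it directly, it goes as follows.
\begin{itemize}
\item To get $\TC_G(h\circ g)\leq \TC_G(g)$, postcompose each section $s_i\colon U_i\to Z^I$ with $h^I$.
\item To get $\TC_G(k\circ f)\leq \TC_G(k)$, pull back a $G$-invariant cover of $Y\times Y$ along $f\times f$ and precompose the sections with $f\times f$.
\end{itemize}
Equivariance is preserved in both cases, since all maps involved are $G$-maps.

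\emph{Composition axiom for $\zcl_G$.} We mirror the proof of Lemma~\ref{lem: composition ineq}. Suppose $\zcl_G(g)=k$, and let $u_1,\dots,u_{k+1}$ lie in the image of $\ker(\Delta_W)^{h*}_G$ under $((hgf)\times(hgf))^*$. By functoriality of Borel cohomology,
\[((hgf)\times(hgf))^*=(f\times f)^*(g\times g)^*(h\times h)^*.\]
Naturality of diagonals gives $\Delta_Z^*(h\times h)^*=h^*\Delta_W^*$, so $(h\times h)^*$ maps $\ker\Delta_W^*$ into $\ker\Delta_Z^*$. Hence each $u_i=(f\times f)^*(a_i)$ with $a_i$ in the image of $\ker\Delta_Z^*$ under $(g\times g)^*$. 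Because the induced maps are ring homomorphisms,
\[u_1\cdots u_{k+1}=(f\times f)^*(a_1\cdots a_{k+1})=0.\]
This gives $\zcl_G(hgf)\leq \zcl_G(g)$.

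The main point needing care is the naturality step, namely that $(h\times h)^*$ preserves zero-divisors in Borel cohomology. This holds because $(h\times h)\circ\Delta_Z=\Delta_W\circ h$ as $G$-maps, so the corresponding identity holds after passing to Borel constructions.
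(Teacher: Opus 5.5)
Your proof is correct and takes essentially the same route as the paper. The paper simply cites, for both $\zcl_G$ and $\TC_G$, the identity property and the two-map inequality $\mathbf{I}^G(f\circ g)\leq\min\{\mathbf{I}^G(f),\mathbf{I}^G(g)\}$. You additionally make explicit the reduction of the three-map axiom to two applications of that inequality, and you verify the $\zcl_G$ case directly via naturality of the diagonal in Borel cohomology (mirroring Lemma~\ref{lem: composition ineq}) rather than citing it.
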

\begin{proof}
    For $\mathbf{I}^G= \zcl_G, \TC_G$, we know that $\mathbf{I}^G(\id_X)=\mathbf{I}^G(X)$ for all $G$-spaces $X$.
    Moreover, $\mathbf{I}^G(f\circ g)\leq \min\{\mathbf{I}^G(f),\mathbf{I}^G(g)\}$ for all $G$-maps $X\xrightarrow{g}Y\xrightarrow{f}Z$ (see \cite{daundkar2026equivariant}).
\end{proof}
Thus we have categorical invariants associated to a persistent $G$-space $X_\bullet$.
\begin{proposition}
Let $X_{\bullet}$ be a persistent $G$-space. Then
$$\cl_G(\X)\leq \cat_G(\X)\leq \TC_G(\X).$$ 
\end{proposition}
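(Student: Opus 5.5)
Since all three invariants of $\X$ are defined intervalwise, it suffices to fix an interval $[a,b]\in\mathscr{Int}$ and prove, for the $G$-map $f=f^b_a\colon X_a\to X_b$,
\[
\cl_G(f)\leq \cat_G(f)\leq \TC_G(f).
\]
The two inequalities are of different natures: the first is cohomological, the second is geometric.

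\textbf{First inequality.} Suppose $\cat_G(f)=n$, and let $\{U_0,\dots,U_n\}$ be a $G$-invariant open cover of $X_a$ such that each $f|_{U_i}$ is $G$-homotopic to a $G$-map $c_i$ with image in the orbit $G\cdot y_i$. Take classes $u_0,\dots,u_n\in f_G^{h*}(\tilde H_G^*(X_b))$, say $u_i=f_G^{h*}(v_i)$. Restricting to $U_i$, the class $u_i|_{U_i}$ equals $(c_i)_G^{h*}(v_i)$, because $G$-homotopic maps induce homotopic maps on Borel constructions. The map $c_i$ factors through $G\cdot y_i\cong G/G_{y_i}$, so $(c_i)_G^{h*}(v_i)$ factors through $H^*_G(G/G_{y_i})=H^*(BG_{y_i})$. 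Thus $u_i|_{U_i}$ is the image of $v_i|_{G y_i}$.

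This is the main obstacle. Unlike the nonequivariant case, $\tilde H_G^*$ of an orbit need not vanish, so $v_i|_{Gy_i}$ need not be zero. The standard remedy is to read $\tilde H_G^*(Y)$ as the kernel of restriction to orbits, i.e.\ $\ker\bigl(H_G^*(Y)\to H^*_G(G y)\bigr)$ for all $y$. This is the convention under which equivariant cup length bounds equivariant category, as in Marzantowicz/Colman--Grant, and I would state it explicitly. With this convention $u_i|_{U_i}=0$. Each $u_i$ then lifts to a relative class in $H_G^*(X_a,U_i)$, since the Borel construction turns $G$-invariant open sets into open sets of $(X_a)_G^h$. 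The cup product of these lifts lies in $H_G^*\bigl(X_a,\bigcup_i U_i\bigr)=H_G^*(X_a,X_a)=0$, so $u_0\cup\cdots\cup u_n=0$ and $\cl_G(f)\leq n$.

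\textbf{Second inequality.} I would invoke Lemma~\ref{properties_of_tc_of_maps}(4), which gives $\cat_H(f)\leq \TC_G(f)$ whenever $H$ is a stabilizer of a point of $X_a$. That statement is not quite what is needed, so I would instead argue directly, adapting the proof of (4). Fix $x_0\in X_a$. Given a $\TC_G$-cover $\{W_i\}$ of $X_a\times X_a$ with sections $s_i$, set $U_i=\{x\mid (x_0,x)\in W_i\}$. Then $s_i(x_0,x)$ is a path from $f(x_0)$ to $f(x)$, and this gives a homotopy from the constant map to $f|_{U_i}$. The problem is that $U_i$ is only invariant under the stabilizer of $x_0$. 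To repair this, replace $U_i$ by its saturation $G\cdot U_i=\{x\mid (gx_0,x)\in W_i \text{ for some } g\}$, and use the equivariance $s_i(gx_0,gx)=g\,s_i(x_0,x)$ to define the homotopy to the orbit $G f(x_0)$. Making this well defined requires a choice of $g$ depending continuously on $x$, and I expect to need $X_a$ to be $G$-connected (or to pass through $\cat_G(f)\leq\cat_G(f\times f)$-style arguments) here.

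If the direct construction fails, I would fall back on citing the corresponding statement $\cat_G(f)\leq\TC_G(f)$ for $G$-connected targets from \cite{daundkar2026equivariant}, and add $G$-connectedness of each $X_b$ as a hypothesis. Finally, applying both inequalities for every interval $[a,b]$ yields the claimed inequalities of functors on $(\mathscr{Int},\subseteq)$.
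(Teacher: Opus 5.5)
Your reduction to a single interval $[a,b]$ matches the paper's proof. That proof simply asserts $\cl_G(f_a^b)\le\cat_G(f_a^b)\le\TC_G(f_a^b)$ and justifies neither map-level inequality, so your caution about both is warranted. For the first inequality, reading $\tilde H^*_G$ as reduced Borel cohomology really does fail. Take $X_\bullet$ constant at a point, $G=\mathbb{S}^1$ and $R=\mathbb{Q}$: then $\cl_G=\nil(t\mathbb{Q}[t])=\infty$ while $\cat_G=0$. Your relative cup-product argument is a correct proof once $\tilde H^*_G(X_b)$ is replaced by the kernel of restriction to all orbits. When $G$ is finite and $|G|$ is invertible in $R$, that kernel is just $H^{>0}_G$, so the paper's rational examples are unaffected.

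The genuine gap is the second inequality. Neither of your routes can close it, because $\cat_G(f)\le\TC_G(f)$ is false without an extra hypothesis. Let $\mathbb{Z}_2$ act antipodally on $\mathbb{S}^3$ and take $X_\bullet$ constant.
\begin{itemize}
\item A $\mathbb{Z}_2$-homotopy of $U\hookrightarrow\mathbb{S}^3$ into an orbit descends to a null-homotopy of $U/\mathbb{Z}_2\hookrightarrow\mathbb{RP}^3$. Hence $\cat_{\mathbb{Z}_2}(\mathbb{S}^3)\ge\cat(\mathbb{RP}^3)=3$.
\item On the other hand $\TC_{\mathbb{Z}_2}(\mathbb{S}^3)\le 1$. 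On $\{y\neq -x\}$ use the shortest geodesic. On $\{y\neq x\}$ use the shortest geodesic from $x$ to $-y$ followed by the semicircle $t\mapsto\cos(\pi t)(-y)+\sin(\pi t)v(-y)$, where $v(z)=(-z_2,z_1,-z_4,z_3)$. Since $v$ is linear, both planners commute with $x\mapsto -x$.
\end{itemize}
So your saturation step cannot be made to work in general.

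Your fallback is also misdirected. The statement you would cite is not among the recorded properties: Lemma~\ref{properties_of_tc_of_maps}(5) is the reverse bound $\TC_G(f)\le\cat_G(f\times f)$. Moreover, connectedness of the target is not the relevant condition. Glue to $\mathbb{S}^3$ two arcs from $\pm x_*$ to a common point $p$, swapped by the involution. The resulting $Y$ has $Y$ and $Y^{\mathbb{Z}_2}=\{p\}$ path-connected. Yet the inclusion $\iota\colon\mathbb{S}^3\to Y$ has $\TC_{\mathbb{Z}_2}(\iota)\le 1<3\le\cat_{\mathbb{Z}_2}(\iota)$, because $\mathbb{RP}^3\to Y/\mathbb{Z}_2$ is a homotopy equivalence.

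The missing idea is a $G$-fixed point in the source. If $x_0\in X_a^G$, your first construction works verbatim with no saturation. Indeed, $U_i=\{x\mid (x_0,x)\in W_i\}$ is then $G$-invariant, and $(x,t)\mapsto s_i(x_0,x)(t)$ is a $G$-homotopy from the constant map at the fixed point $f(x_0)$ to $f|_{U_i}$. This is exactly Lemma~\ref{properties_of_tc_of_maps}(4) with $H=G_{x_0}=G$. So the proposition needs $X_a^G\neq\emptyset$ for all $a$, together with the orbit-kernel convention for $\cl_G$. Under those hypotheses your argument goes through.
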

\begin{proof}
The proof follows from the following inequalities
    $$\cl_G(X_\bullet)[a,b]=\cl_G(f_a^b)\leq \cat_G(f_a^b)\leq \TC_G(f_a^b).$$
\end{proof}

From \cite[Theorem 3.15]{daundkar2026equivariant}, we have the following lemma.
\begin{lemma}\label{lowerbound}
Let $f\colon X\to Y$ be a $G$-map. Suppose $u_i\in H_G^*(X\times X;R)$ satisfy $u_i\in
\ker((\Delta_X)_G^{h*})\cap \mathrm{im}\bigl((f\times f)_G^{h*}\bigr)$ for $0\leq i\leq k$, where $R$ is a commutative ring. If $u_0\cup \dots\cup u_k\neq 0$, then $k+1\leq \TC_G(f)$.   
\end{lemma}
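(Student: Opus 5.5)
The plan is the standard Schwarz-type cohomological argument, transported to Borel cohomology; the only input beyond the definitions is that Borel cohomology is a functor on $G$-maps and respects $G$-homotopies. Argue by contradiction: suppose $\TC_G(f)\leq k$. Then there is a $G$-invariant open cover $\{U_0,\dots,U_k\}$ of $X\times X$ together with $G$-maps $s_i\colon U_i\to Y^I$ satisfying $s_i(x_0,x_1)(0)=f(x_0)$ and $s_i(x_0,x_1)(1)=f(x_1)$. The goal is to show that each $u_i$ restricts to zero on $U_i$ in Borel cohomology. A relative cup-product argument then forces $u_0\cup\dots\cup u_k=0$, contradicting the hypothesis.

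\textbf{Step 1.} Write $u_i=(f\times f)_G^{h*}(v_i)$ with $v_i\in H_G^*(Y\times Y;R)$. The section $s_i$ provides a $G$-homotopy
\[
(x_0,x_1,t)\mapsto \bigl(s_i(x_0,x_1)(t),\,f(x_1)\bigr)
\]
between $(f\times f)|_{U_i}$ and the map $\Delta_Y\circ f\circ \mathrm{pr}_2|_{U_i}$. Because $s_i$ is equivariant, this homotopy is $G$-equivariant, so it induces a homotopy of Borel constructions $(U_i)^h_G\times I\to (Y\times Y)^h_G$. Hence
\[
u_i|_{U_i}=\mathrm{pr}_2^{*}f^{*}\Delta_Y^{*}(v_i).
\]
This is the step I expect to be the main obstacle. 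The hypothesis only says $\Delta_X^*(u_i)=0$, that is $\Delta_X^*(f\times f)^*(v_i)=f^*\Delta_Y^*(v_i)=0$, and this does not give $\Delta_Y^*(v_i)=0$. The remedy is to pull back along $\mathrm{pr}_2$ first: since
\[
f^*\Delta_Y^*(v_i)=\Delta_X^*\bigl((f\times f)^*(v_i)\bigr)=\Delta_X^*(u_i)=0,
\]
we get
\[
u_i|_{U_i}=\mathrm{pr}_2^*\bigl(f^*\Delta_Y^*(v_i)\bigr)=\mathrm{pr}_2^*\bigl(\Delta_X^*(u_i)\bigr)=0.
\]
So the kernel hypothesis on $u_i$ itself is exactly what is needed, and it is essential that the homotopy factors through $f$ rather than through an arbitrary point of $Y$.

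\textbf{Step 2.} From the long exact sequence of the pair $\bigl((X\times X)^h_G,(U_i)^h_G\bigr)$, each $u_i$ lifts to a relative class
\[
\tilde u_i\in H^*\bigl((X\times X)^h_G,(U_i)^h_G;R\bigr).
\]
The Borel construction preserves open covers, since $(U_i)^h_G=EG\times_G U_i$ is open in $(X\times X)^h_G$. So the relative cup product lands in
\[
H^*\Bigl((X\times X)^h_G,\textstyle\bigcup_i (U_i)^h_G\Bigr)=H^*\bigl((X\times X)^h_G,(X\times X)^h_G\bigr)=0.
\]
This cup product maps to $u_0\cup\dots\cup u_k$, which is therefore zero. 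That contradicts the hypothesis, and hence $\TC_G(f)\geq k+1$.

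A minor point to check is that the relative cup product is defined for open covers of the Borel space, which requires excisiveness. This holds for open subsets in singular cohomology, so no extra hypotheses on $X$ are needed.
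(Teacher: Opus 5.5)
Your proof is correct. The paper gives no argument of its own for this lemma; it is quoted from Theorem~3.15 of \cite{daundkar2026equivariant}. There one would expect it to follow from two ingredients: viewing $\TC_G(f)$ as the $G$-sectional category of the pullback along $f\times f$ of the free path fibration $Y^I\to Y\times Y$, and the general Borel-cohomology bound $\mathrm{secat}_G(p)\geq\nil(\ker p^*)$ for a $G$-fibration $p$. You instead run the Schwarz-type argument directly on a motion-planning cover.

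This makes the paper self-contained at this point and shows exactly which hypothesis is used. Because your $G$-homotopy ends at $\Delta_Y\circ f\circ\mathrm{pr}_2=(f\times f)\circ\Delta_X\circ\mathrm{pr}_2$, the vanishing of $u_i$ on $U_i$ needs only $u_i\in\ker\Delta_X^*\cap\mathrm{im}(f\times f)^*$. It does not need $u_i\in(f\times f)^*(\ker\Delta_Y^*)$. In particular, your argument covers the paper's $\zcl_G(f)$, whose defining classes lie in $(f\times f)^*(\ker\Delta_Y^*)$. It therefore also yields the inequality $\zcl_G(X_\bullet;R)\leq\TC_G(X_\bullet)$ stated right after the lemma.

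The remaining details are routine:
\begin{enumerate}
\item padding a shorter cover with empty sets;
\item continuity of the adjoint $U_i\times I\to Y$ of $s_i$;
\item the identification $EG\times_G(U_i\times I)\cong(EG\times_G U_i)\times I$;
\item openness of $EG\times_G U_i$ in $EG\times_G(X\times X)$, which holds because the orbit map is open and $U_i$ is invariant.
\end{enumerate}
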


Using Lemma~\ref{lowerbound}, we obtain the following.
\begin{proposition}
    Let $X_{\bullet}$ be a persistent $G$-space. Then 
    \[\zcl_G(X_{\bullet};R)\leq \TC_G(X_{\bullet}).\]
\end{proposition}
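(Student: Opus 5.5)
The inequality is between two functors $(\mathscr{Int},\subseteq)\to\mathbb N\cup\{\infty\}$, so I read it pointwise: for every interval $[a,b]\in\mathscr{Int}$ the plan is to show
\[
\zcl_G(X_\bullet;R)([a,b])=\zcl_G(f_a^b;R)\leq \TC_G(f_a^b)=\TC_G(X_\bullet)([a,b]).
\]
This reduces everything to the single $G$-map $f=f_a^b\colon X_a\to X_b$. The statement for maps should then follow directly from Lemma~\ref{lowerbound}.

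First I would fix the convention. By definition $\zcl_G(f)$ is the nilpotency of $\mathrm{im}\bigl((f\times f)_G^{h*}\colon \ker(\Delta_{X_b})_G^{h*}\to H_G^*(X_a\times X_a;R)\bigr)$. Here I take the nilpotency of an ideal to be the largest number $k+1$ of elements with a nonzero product, which matches the classical convention $\zcl\leq\TC$ with reduced TC. If $\zcl_G(f)=0$ there is nothing to prove. Otherwise, suppose $\zcl_G(f)\geq k+1$. Then there are classes $\alpha_0,\dots,\alpha_k\in H_G^*(X_b\times X_b;R)$ with $\Delta_{X_b}^*\alpha_i=0$ and
\[
(f\times f)^*(\alpha_0\cup\cdots\cup\alpha_k)\neq 0 .
\]
If $\zcl_G(f)=\infty$, this holds for every $k$.

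Next I would set $u_i=(f\times f)_G^{h*}(\alpha_i)$. Each $u_i$ lies in $\mathrm{im}\bigl((f\times f)_G^{h*}\bigr)$ by construction. Naturality of Borel cohomology gives $\Delta_{X_a}^*u_i=f^*\Delta_{X_b}^*\alpha_i=0$, because $(f\times f)\circ\Delta_{X_a}=\Delta_{X_b}\circ f$ as $G$-maps; hence $u_i\in\ker(\Delta_{X_a})_G^{h*}$. Since $(f\times f)_G^{h*}$ is a ring homomorphism, $u_0\cup\cdots\cup u_k=(f\times f)^*(\alpha_0\cup\cdots\cup\alpha_k)\neq0$. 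Lemma~\ref{lowerbound} then gives $k+1\leq\TC_G(f)$. Taking the supremum over $k$ yields $\zcl_G(f)\leq\TC_G(f)$, including the case $\zcl_G(f)=\infty$.

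The only delicate point is bookkeeping of indexing conventions. The definition of $\zcl_G$ as written indexes the classes inconsistently ($\alpha_1,\dots,\alpha_k$ versus $i=0,\dots,n$), so I would make explicit that $\zcl_G(f)=n$ means a nonzero product of $n$ zero-divisors, not $n+1$. With that reading, Lemma~\ref{lowerbound} with $n=k+1$ classes gives exactly $n\leq\TC_G(f)$, and the argument goes through with no loss.
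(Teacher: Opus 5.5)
Your proposal is correct and follows the paper's route. The paper obtains the inequality pointwise on each interval $[a,b]$ directly from Lemma~\ref{lowerbound} applied to $f_a^b$. You have written out the verification that the pulled-back zero-divisors $(f_a^b\times f_a^b)_G^{h*}(\alpha_i)$ lie in $\ker(\Delta_{X_a})_G^{h*}\cap\mathrm{im}\bigl((f_a^b\times f_a^b)_G^{h*}\bigr)$ and have nonzero product. Your reading of the indexing (counting the factors in a nonzero product) is the one consistent with the paper's own computations, e.g.\ $\zcl_{\mathbb{Z}_2}(\mathbb{S}^n;\mathbb{Q})=1$.
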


Now we provide some computations of $\zcl_G$ in the static setting. 
\begin{lemma}\label{zcl_cal}
    Let $\mathbb{Z}_2$ act on $\mathbb{S}^n$ by reflection in the last coordinate. Then 
    \[\zcl_{\mathbb{Z}_2}(\mathbb{S}^n;\mathbb{Q})=1.\]
\end{lemma}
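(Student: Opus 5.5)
The plan is to compute $\zcl_{\mathbb{Z}_2}(\mathbb{S}^n;\mathbb{Q})=\zcl_{\mathbb{Z}_2}(\id_{\mathbb{S}^n};\mathbb{Q})$ directly from the definition. By that definition it is the nilpotency of $\ker\bigl((\Delta_{\mathbb{S}^n})_{\mathbb{Z}_2}^{h*}\bigr)\subseteq H^*_{\mathbb{Z}_2}(\mathbb{S}^n\times\mathbb{S}^n;\mathbb{Q})$, where $\mathbb{Z}_2$ acts diagonally on $\mathbb{S}^n\times\mathbb{S}^n$.

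The first step, and the only real input, is the standard transfer fact. For a finite group $G$ and rational coefficients, the fibration $X\to EG\times_G X\to BG$ has $\tilde H^*(BG;\mathbb{Q})=0$. The Serre spectral sequence, or equivalently the transfer for the finite covering $EG\times X\to EG\times_G X$, therefore gives a natural ring isomorphism
\[
H^*_G(X;\mathbb{Q})\cong H^*(X;\mathbb{Q})^G .
\]
It is natural in $G$-maps, so in particular it is compatible with $\Delta^*$. This is the step I expect to require the most care to state cleanly, although it is classical. It is also what underlies the paper's remark that $\tilde H^*_{\mathbb{Z}_2}(\mathbb{S}^n;\mathbb{Q})=0$.

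Next I determine the invariants. Let $a\in H^n(\mathbb{S}^n;\mathbb{Q})$ be a generator. The reflection has degree $-1$, so $g^*a=-a$. By the Künneth theorem,
\[
H^*(\mathbb{S}^n\times\mathbb{S}^n;\mathbb{Q})=\mathbb{Q}\{1,\ a\times 1,\ 1\times a,\ a\times a\},
\]
and the diagonal action is given by $g^*(a\times 1)=-(a\times1)$, $g^*(1\times a)=-(1\times a)$ and $g^*(a\times a)=(-1)^2(a\times a)=a\times a$. Hence
\[
H^*_{\mathbb{Z}_2}(\mathbb{S}^n\times\mathbb{S}^n;\mathbb{Q})\cong \mathbb{Q}\{1,\ u\},\qquad u=a\times a,
\]
with $u$ in degree $2n$.

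Finally I compute the kernel and its nilpotency. The map $\Delta^*$ is an isomorphism in degree $0$. For $u$ we have $\Delta^*(u)=a\cup a=0$, since it lies in $H^{2n}(\mathbb{S}^n)=0$. Thus $\ker\bigl((\Delta_{\mathbb{S}^n})_{\mathbb{Z}_2}^{h*}\bigr)=\mathbb{Q}u$. This ideal contains the nonzero element $u$, while $u\cup u$ lies in degree $4n$, where $H^*(\mathbb{S}^n\times\mathbb{S}^n)$ vanishes. Therefore the longest nonzero product of zero-divisors has length exactly $1$, which gives $\zcl_{\mathbb{Z}_2}(\mathbb{S}^n;\mathbb{Q})=1$. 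Note that $u$ also equals the invariant class $(a\times1-1\times a)^2/(-2)$ when $n$ is even; this observation is not needed for the argument.
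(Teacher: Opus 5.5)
Your proposal is correct and follows essentially the same route as the paper. Both identify $H^*_{\mathbb{Z}_2}(-;\mathbb{Q})$ with $\mathbb{Z}_2$-invariants, and both use K\"unneth together with the fact that the reflection acts by $-1$ on $H^n(\mathbb{S}^n;\mathbb{Q})$ to see that the invariants in $H^*(\mathbb{S}^n\times\mathbb{S}^n;\mathbb{Q})$ are spanned by $1$ and $a\times a$. Both then note that $a\times a$ spans the kernel of $\Delta^*$ and squares to zero. Your write-up is slightly more careful than the paper's, since you justify the transfer isomorphism and its naturality with respect to $\Delta$, which the paper uses without comment.
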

\begin{proof}

Suppose the action of $\mathbb{Z}_2$ on $\mathbb{S}^n$ is given by the map $\alpha\colon \mathbb{S}^n\to S^n$, defined as $$\alpha(x_1,\dots,x_n)=(x_1,\dots,x_{n-1},-x_n).$$ Then if $u$ be the generator of $H^n(\mathbb{S}^n;\mathbb{Q})$, then $\alpha^*(u)=-u$. This implies that $H_{\mathbb{Z}_2}^*(\mathbb{S}^n;\mathbb{Q})=H^*(\mathbb{S}^n;\mathbb{Q})^{\mathbb{Z}_2}=\mathbb{Q}$.

Now we analyze $H_{\mathbb{Z}_2}(\mathbb{S}^n\times \mathbb{S}^n;\mathbb{Q})= H^*(\mathbb{S}^n\times \mathbb{S}^n;\mathbb{Q})^{\mathbb{Z}_2}$. Using Kunneth formula, we note that $u_1=1\otimes u$ and $u_2=u\otimes 1$ are the generators of $H^n(\mathbb{S}^n\times \mathbb{S}^n;\mathbb{Q})$ and $u_1 \cup u_2= u\otimes u$ is the generator of $H^{2n}(\mathbb{S}^n\times \mathbb{S}^n;\mathbb{Q})$. Clearly $\alpha^*(u_1)=-u_1$ and $\alpha^*(u_2)=-u_2$. Therefore $\alpha^*(u_1\cup u_2)= u_1 \cup u_2$. Thus $H^*(\mathbb{S}^n\times \mathbb{S}^n;\mathbb{Q})^{\mathbb{Z}_2}\cong \mathbb{Q}\oplus \mathbb{Q}\{u_1\cup u_2\}$. 

It follows from $\Delta_{\mathbb{S}^n}^*(u_1)=u= \Delta_{\mathbb{S}^n}^*(u_2)$ that the homomorphism  $$(\Delta_{\mathbb{S}^n})_{\mathbb{Z}_2}^{h\ast}\colon H_{\mathbb{Z}_2}^*(\mathbb{S}^n\times \mathbb{S}^n;\mathbb{Q})\to H_{\mathbb{Z}_2}^*(\mathbb{S}^n;\mathbb{Q})$$ satisfies $(\Delta_{\mathbb{S}^n})_{\mathbb{Z}_2}^{h\ast}(u_1\cup u_2)= u^2=0$. Hence $u_1\cup u_2\in \ker((\Delta_{\mathbb{S}^n})_{\mathbb{Z}_2}^{h\ast})$ is the only non-zero class, so $\zcl_{\mathbb{Z}_2}(\mathbb{S}^n)= 1$. 
\end{proof}

\begin{lemma}
    Suppose $\mathbb{S}^1$ acts on $X=\mathbb{S}^1$ by rotation. Then
    \[\zcl_{\mathbb{S}^1}(\mathbb{S}^1;\mathbb{Q})=1.\]
\end{lemma}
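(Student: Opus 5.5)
The plan is to compute the Borel cohomology rings involved directly, using that the rotation action of $\mathbb{S}^1$ on itself is free and transitive. First, $X^h_{\mathbb{S}^1}=E\mathbb{S}^1\times_{\mathbb{S}^1}\mathbb{S}^1\simeq E\mathbb{S}^1$ is contractible. Hence $H^*_{\mathbb{S}^1}(\mathbb{S}^1;\mathbb{Q})=\mathbb{Q}$, concentrated in degree $0$.

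Next, for the diagonal action on $\mathbb{S}^1\times\mathbb{S}^1$, the action is free with quotient $(\mathbb{S}^1\times\mathbb{S}^1)/\mathbb{S}^1\cong\mathbb{S}^1$. The homeomorphism is $[z,w]\mapsto z^{-1}w$, and the relevant $\mathbb{S}^1$ is the ``difference'' circle. Since the action is free, the Borel construction is homotopy equivalent to the orbit space, because the projection $E\mathbb{S}^1\times_{\mathbb{S}^1}(\mathbb{S}^1\times\mathbb{S}^1)\to(\mathbb{S}^1\times\mathbb{S}^1)/\mathbb{S}^1$ has fibre $E\mathbb{S}^1$. Therefore
\[
H^*_{\mathbb{S}^1}(\mathbb{S}^1\times\mathbb{S}^1;\mathbb{Q})\cong H^*(\mathbb{S}^1;\mathbb{Q})=\mathbb{Q}\oplus\mathbb{Q}\{v\},\qquad |v|=1.
\]

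The diagonal map $\Delta$ induces, on orbit spaces, the inclusion of the point $\{1\}$ into the difference circle. So $(\Delta)^{h*}_{\mathbb{S}^1}$ kills $v$ and is the identity in degree $0$. The kernel is therefore exactly $\mathbb{Q}\{v\}$, and it is nonzero, so $\zcl_{\mathbb{S}^1}(\mathbb{S}^1;\mathbb{Q})\geq 1$. For the upper bound, $v\cup v=0$ for degree reasons, since $H^2$ of a circle vanishes. Every product of two zero-divisors is thus zero, and the nilpotency computed via $\zcl_{\mathbb{S}^1}(\id)$ equals $1$.

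The main obstacle is making the free-action identification $X^h_G\simeq X/G$ rigorous, together with naturality, so that $\Delta^{h*}$ corresponds to the map on orbit spaces. I would invoke the standard fact for free actions of compact Lie groups, which follows from the fibre bundle $E G\times_G X\to X/G$ with contractible fibre $EG$. Naturality of this homotopy equivalence under $G$-maps is immediate, because the projection commutes with the induced maps. The rest is bookkeeping in the convention of the definition, where the index runs over $\alpha_1,\dots,\alpha_n$ with nonzero product.
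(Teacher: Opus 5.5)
Your proposal is correct and follows essentially the same route as the paper: identify the Borel cohomology of the two free actions with the cohomology of the orbit spaces (a point and a circle), observe that the degree-one generator lies in the kernel of $(\Delta_X)^{h*}_{\mathbb{S}^1}$, and use that its square vanishes. Your explicit orbit map $[z,w]\mapsto z^{-1}w$ and your description of $\Delta$ on orbit spaces as the inclusion of a point only make explicit details that the paper leaves implicit.
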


\begin{proof}
 Here the action is free and transitive, so $H_{\mathbb{S}^1}^*(X;\mathbb{Q})\cong H^*(X/{\mathbb{S}^1};\mathbb{Q})\cong \mathbb{Q}$. 
 Also $H_{\mathbb{S}^1}^*(X\times X;\mathbb{Q})\cong H^*(X\times X/{\mathbb{S}^1};\mathbb{Q})\cong H^*(\mathbb{S}^1;\mathbb{Q})\cong \mathbb{Q}\oplus \mathbb{Q}\{u\}$, where $u$ is the generator of $H_{\mathbb{S}^1}^1(X\times X;\mathbb{Q})$. This gives that $(\Delta_X)_{\mathbb{S}^1}^{h\ast}$ is trivial. Hence $u\in \ker((\Delta_X)_{\mathbb{S}^1}^{h\ast})$. As $u^2=0$, we have $\zcl_{\mathbb{S}^1}(\mathbb{S}^1;\mathbb{Q})=1$.
\end{proof}

\begin{lemma}
    Let $\mathbb{Z}_2$ acts on $\mathbb{CP}^n$ by complex conjugation. Then $\zcl_{\mathbb{Z}_2}(\mathbb{CP}^n;\mathbb{Q})\geq n$.
\end{lemma}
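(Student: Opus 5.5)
The plan is to follow the method of Lemma~\ref{zcl_cal}: with rational coefficients and $G=\mathbb{Z}_2$ finite, use the identification $H_{\mathbb{Z}_2}^*(Y;\mathbb{Q})\cong H^*(Y;\mathbb{Q})^{\mathbb{Z}_2}$, which is natural in $\mathbb{Z}_2$-maps. By naturality, $(\Delta_{\mathbb{CP}^n})^{h*}_{\mathbb{Z}_2}$ becomes the restriction of the ordinary $\Delta^*$ to invariant classes. The goal is then to exhibit $n$ invariant zero-divisors whose product is nonzero, which gives $\zcl_{\mathbb{Z}_2}(\mathbb{CP}^n;\mathbb{Q})\geq n$.

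First I would compute the action on cohomology. Write $H^*(\mathbb{CP}^n;\mathbb{Q})=\mathbb{Q}[x]/(x^{n+1})$ with $|x|=2$. Complex conjugation reverses the orientation of $\mathbb{CP}^1\subset\mathbb{CP}^n$, so it sends $x\mapsto -x$, and hence $x^k\mapsto(-1)^kx^k$. By the K\"unneth formula, $H^*(\mathbb{CP}^n\times\mathbb{CP}^n;\mathbb{Q})=\mathbb{Q}[x_1,x_2]/(x_1^{n+1},x_2^{n+1})$ with $x_1=x\otimes 1$ and $x_2=1\otimes x$. The diagonal action sends $x_1^ax_2^b\mapsto(-1)^{a+b}x_1^ax_2^b$, so the invariant subring is spanned by the monomials of even total degree in $x_1,x_2$.

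Next I would take $v=(x_1-x_2)^2$. It has even total degree, so it is invariant, and $\Delta^*(x_1-x_2)=x-x=0$ gives $\Delta^*(v)=0$. Thus $v\in\ker\bigl((\Delta_{\mathbb{CP}^n})^{h*}_{\mathbb{Z}_2}\bigr)$, and the image condition in the definition of $\zcl_{\mathbb{Z}_2}$ of the identity map is automatic. Taking $\alpha_1=\dots=\alpha_n=v$, their product is $v^n=(x_1-x_2)^{2n}$. Expanding in the truncated ring, only the term $x_1^nx_2^n$ survives, with coefficient $(-1)^n\binom{2n}{n}\neq 0$ over $\mathbb{Q}$. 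Hence $v^n\neq 0$, and we get $n$ zero-divisors with nonzero product.

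The step needing the most care is justifying $H_{\mathbb{Z}_2}^*(-;\mathbb{Q})\cong H^*(-;\mathbb{Q})^{\mathbb{Z}_2}$ naturally, including for the product space and the diagonal map. I would cite the transfer, or equivalently the collapse of the Serre spectral sequence of $X\to X^h_{\mathbb{Z}_2}\to B\mathbb{Z}_2$ since $\tilde H^*(B\mathbb{Z}_2;\mathbb{Q})=0$, exactly as in Lemma~\ref{zcl_cal}. The other point to check is that conjugation acts by $-1$ on $H^2$. Finally, I would match the counting convention with Lemma~\ref{zcl_cal}, where a single nonzero zero-divisor gave value $1$; under that convention, $n$ factors with nonzero product give the bound $\geq n$.
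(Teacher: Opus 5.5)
Your proposal is correct and follows essentially the same route as the paper. Both identify $H^*_{\mathbb{Z}_2}(-;\mathbb{Q})$ with the conjugation-invariant part of ordinary cohomology, use that conjugation acts by $-1$ on $H^2(\mathbb{CP}^n;\mathbb{Q})$, and exhibit a single invariant degree-$4$ zero-divisor whose $n$-th power is nonzero. The only difference is the choice of element: the paper takes $uv-u^2=u(v-u)$, whose $n$-th power is $u^nv^n$, while you take $(x_1-x_2)^2$, whose $n$-th power is $(-1)^n\binom{2n}{n}x_1^nx_2^n$. Your explicit coefficient check is more complete than the paper's bare assertion that the power is nonzero.
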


\begin{proof}
    Let $\tau\colon \mathbb{CP}^n\to \mathbb{CP}^n$ given by $\tau([z_1,\dots,z_n])=[\bar{z}_1,\dots,\bar{z}_n]$ represents the $\mathbb{Z}_2$-action on $\mathbb{CP}^n$.

    If $u$ be the generator of $H^2(\mathbb{CP}^n;\mathbb{Q})$, then we have $\tau^*(u^k)=(-1)^k u^k$. This implies that 
    \[H_{\mathbb{Z}_2}^*(\mathbb{CP}^n;\mathbb{Q})\cong H^*(\mathbb{CP}^n;\mathbb{Q})^{\mathbb{Z}_2}\cong 
    \begin{cases}
      \mathbb{Q}\oplus \mathbb{Q}\{u^2\}\oplus \dots\oplus\mathbb{Q}\{u^{2n}\}, \text{ if } n \text{ is even},\\
       \mathbb{Q}\oplus \mathbb{Q}\{u^2\}\oplus \dots\oplus\mathbb{Q}\{u^{2n-2}\}, \text{ if } n \text{ is odd}.
    \end{cases}
    \]
    Similarly, we have \[H_{\mathbb{Z}_2}^*(\mathbb{CP}^n\times \mathbb{CP}^n;\mathbb{Q})\cong \big(\frac{\mathbb{Q}[u,v]}{u^{n+1}, v^{n+1}}\big)^{\mathbb{Z}_2}.\]
    Note that $u^kv^l\in H_{\mathbb{Z}_2}^*(\mathbb{CP}^n\times \mathbb{CP}^n;\mathbb{Q})$ if either $k$ and $l$ are both odd or $k$ and $l$ are both even. Also $u^{2i}, v^{2i}\in H_{\mathbb{Z}_2}^*(\mathbb{CP}^n\times \mathbb{CP}^n;\mathbb{Q})$ for $i=1,\dots,\lfloor \frac{n}{2} \rfloor+1$.

    Since $\Delta_X^*\colon H^*(\mathbb{CP}^n\times \mathbb{CP}^n;\mathbb{Q})\to H^*(\mathbb{CP}^n;\mathbb{Q})$ maps both $u$ and $v$ to $u$, we have $uv-u^2\in \ker(\Delta_X)_{\mathbb{Z}_2}^{h*}$. As $(uv-u^2)^n\neq 0$ in $H_{\mathbb{Z}_2}^*(\mathbb{CP}^n\times \mathbb{CP}^n;\mathbb{Q})$, $\zcl_{\mathbb{Z}_2}(\mathbb{CP}^n;\mathbb{Q})\geq n$.
\end{proof}

\begin{proposition}
    Suppose $\mathbb{Z}_2$ acts on $\mathbb{S}^n$ by reflection in the last coordinate. Then 
    \[\zcl_{\mathbb{Z}_2}(\vr_{\bullet}(\mathbb{S}^n))(J)=
    \begin{cases}
    1,& \text{if } J\subset  \big(0,\mathrm{arccos}(-\frac{1}{n+1})\big),\\
        0, & \text{if } J\not\subset [0,\pi].
    \end{cases}\]
\end{proposition}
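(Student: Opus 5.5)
The plan mirrors the earlier computations of $\cat_{\mathbb{Z}_2}(\vr_{\bullet}(\mathbb{S}^n))$ and $\TC_{\mathbb{Z}_2}(\vr_{\bullet}(\mathbb{S}^n))$ (Proposition~\ref{TC_cal}). We use rational coefficients throughout, as in Lemma~\ref{zcl_cal}. The two ingredients are homotopy invariance of $\zcl_{\mathbb{Z}_2}$ of maps and the static value $\zcl_{\mathbb{Z}_2}(\mathbb{S}^n;\mathbb{Q})=1$ from Lemma~\ref{zcl_cal}.

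\emph{First case.} Let $J=[a,b]\subset \big(0,\arccos(-\tfrac{1}{n+1})\big)$. By \cite[Theorem 8]{LM25}, each $\vr_t(\mathbb{S}^n)$ with $t$ in this range is $\mathbb{Z}_2$-homotopy equivalent to $\mathbb{S}^n$. The structure map $f_a^b\colon \vr_a(\mathbb{S}^n)\to \vr_b(\mathbb{S}^n)$ is the inclusion, and we will argue it is a $\mathbb{Z}_2$-homotopy equivalence. For this we use that the equivalences in \cite{LM25} are compatible with inclusions: both $\vr_a$ and $\vr_b$ receive the natural map from $\mathbb{S}^n$, and the inclusion commutes with these up to equivariant homotopy. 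This is the step I expect to need the most care, since the statement quoted in the paper only gives objectwise equivalences. If naturality is not directly available, I would fall back on a weaker claim. It suffices that $(f_a^b)^*$ induces an isomorphism on Borel cohomology. This follows because the composite $\mathbb{S}^n\to\vr_a\to\vr_b$ is the canonical equivalence at level $b$.

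Granting this, $(f_a^b\times f_a^b)^*$ on $H^*_{\mathbb{Z}_2}(-\times -;\mathbb{Q})$ is an isomorphism that intertwines the diagonal maps. Hence it carries $\ker(\Delta_{\vr_b})^{h*}_{\mathbb{Z}_2}$ isomorphically onto $\ker(\Delta_{\vr_a})^{h*}_{\mathbb{Z}_2}$. The nilpotency of the image then equals $\zcl_{\mathbb{Z}_2}(\vr_b(\mathbb{S}^n);\mathbb{Q})$. Transporting along $\vr_b(\mathbb{S}^n)\simeq_{\mathbb{Z}_2}\mathbb{S}^n$ and applying Lemma~\ref{zcl_cal}, this value is $1$.

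\emph{Second case.} Suppose $J\not\subset[0,\pi]$, so $b>\pi$. As in the proof of Proposition~\ref{TC_cal}, $\vr_b(\mathbb{S}^n)$ is then the full simplex on $\mathbb{S}^n$. Straight-line contraction to the fixed point $(0,\dots,0,1)$ makes it $\mathbb{Z}_2$-contractible. In particular $\vr_b(\mathbb{S}^n)\times\vr_b(\mathbb{S}^n)$ is $\mathbb{Z}_2$-homotopy equivalent to a point. Thus $H^*_{\mathbb{Z}_2}(\vr_b\times\vr_b;\mathbb{Q})\cong H^*(B\mathbb{Z}_2;\mathbb{Q})=\mathbb{Q}$ in degree $0$, and $\Delta^*$ is injective there. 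The kernel of $(\Delta_{\vr_b})^{h*}_{\mathbb{Z}_2}$ is therefore zero. Its image under $(f_a^b\times f_a^b)^*$ is zero as well, so $\zcl_{\mathbb{Z}_2}(f_a^b)=0$. Equivalently, one can invoke the $G$-categorical invariant property of Proposition~\ref{Categorical invariance 2}, which gives $\zcl_{\mathbb{Z}_2}(f_a^b)\le\zcl_{\mathbb{Z}_2}(\vr_b)=0$.
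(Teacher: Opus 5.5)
Your strategy is the paper's. Its proof reruns the argument of Proposition~\ref{TC_cal}: the objectwise equivalence $\vr_t(\mathbb{S}^n)\simeq_{\mathbb{Z}_2}\mathbb{S}^n$ from \cite[Theorem 8]{LM25} below $\arccos(-\frac{1}{n+1})$, and contractibility above $\pi$. It then plugs in Lemma~\ref{zcl_cal}. Two steps of your write-up need repair. In the second case, $(0,\dots,0,1)$ is \emph{not} a fixed point. Reflection in the last coordinate swaps the two poles and fixes only the equator $\{x_{n+1}=0\}$, so the straight-line contraction to the north pole is not $\mathbb{Z}_2$-equivariant. Contract to an equatorial vertex such as $(1,0,\dots,0)$ instead. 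Alternatively, note that equivariant contractibility is not needed. If $\vr_b(\mathbb{S}^n)$ is contractible, the bundle $E\mathbb{Z}_2\times_{\mathbb{Z}_2}(\vr_b\times\vr_b)\to B\mathbb{Z}_2$ has contractible fibre and is therefore a weak equivalence. That already gives $H^*_{\mathbb{Z}_2}(\vr_b\times\vr_b;\mathbb{Q})\cong H^*(B\mathbb{Z}_2;\mathbb{Q})=\mathbb{Q}$, and the rest of your argument follows.

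In the first case you are right that an objectwise equivalence says nothing about the structure map $f_a^b$; the paper passes over this silently. However, your fallback does not weaken anything. The claim ``the composite $\mathbb{S}^n\to\vr_a\to\vr_b$ is the canonical equivalence at level $b$'' is exactly the naturality you wanted to avoid. There is also no canonical continuous map $\mathbb{S}^n\to|\vr_a(\mathbb{S}^n)|$, since the vertex inclusion is discontinuous; natural comparisons go through $L^\infty$-thickenings of the Kuratowski embedding.

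The input actually needed is much smaller, and you should state and cite it. Rationally, $H^*_{\mathbb{Z}_2}(-;\mathbb{Q})=H^*(-;\mathbb{Q})^{\mathbb{Z}_2}$ naturally. By the objectwise equivalence and Lemma~\ref{zcl_cal}, at every level $t$ in this range $\ker(\Delta_{\vr_t})^{h*}_{\mathbb{Z}_2}$ is spanned by one class $u_1\cup u_2$, built from a generator of $H^n(\vr_t;\mathbb{Q})$, and this class has zero square. Hence $\zcl_{\mathbb{Z}_2}(f_a^b)\le 1$ automatically. Equality holds iff $(f_a^b\times f_a^b)^*$ is nonzero on the level-$b$ class $u_1\cup u_2$, which by K\"unneth happens iff $(f_a^b)^*\neq 0$ on $H^n(-;\mathbb{Q})$.

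This is the non-equivariant fact that the fundamental class of $\vr_\bullet(\mathbb{S}^n)$ persists over $(0,\arccos(-\frac{1}{n+1}))$. It is known from Katz's computation of the filling radius of $\mathbb{S}^n$, combined with the Lim--M\'emoli--Okutan result that the degree-$n$ bar dies at $2\,\mathrm{FillRad}$. Alternatively, use a version of \cite[Theorem 8]{LM25} that is natural in $t$. With that input supplied, your argument is complete.
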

\begin{proof}
Using a similar argument as in Proposition~\ref{TC_cal}, we obtain the result from Lemma~\ref{zcl_cal}.   
\end{proof}
\section{Stability}
In this section, we show that the equivariant persistent invariant is a stable persistent invariant. We first introduce the notions and auxiliary results needed for the stability argument. We follow the techniques developed by Blumberg and Lesnick \cite{BL} to deduce the stability principle in the equivariant setting.

We first define the equivariant analogue of the homotopy interleaving distance introduced in \cite{BL}.

Let $X_{\bullet}, Y_{\bullet}\colon (\mathbb{R},\leq)\to Top^G$ be persistent $G$-spaces. A natural transformation $\eta\colon X_{\bullet}\Rightarrow Y_{\bullet}$ is called objectwise \emph{$G$-weak equivalence} if each component $\eta_t\colon X_t\to Y_t$ is a $G$-weak homotopy equivalence (i.e., $\eta_t^H\colon X_t^H\to Y_t^H$ is a weak homotopy equivalence for each subgroup $H$ of $G$).

We say that $X_{\bullet}$ and $Y_{\bullet}$ are \emph{$G$-weakly equivalent} if there exists a $G$-persistent space $W_{\bullet}$ together with objectwise $G$-weak equivalences $W_{\bullet}\to X_{\bullet}$ and $W_{\bullet}\to Y_{\bullet}$. In this case, we write $X_{\bullet}\simeq^G Y_{\bullet}$.

For $\delta\geq 0$, two persistent spaces $X_{\bullet}$ and $Y_{\bullet}$ are said to be \emph{$G$-$\delta$-homotopy interleaved} if there exist $G$-weakly persistent spaces $X_{\bullet}'\simeq^G X_{\bullet}$ and $Y_{\bullet}'\simeq^G Y_{\bullet}$ such that $X_{\bullet}'$ and $Y_{\bullet}'$ are $G$-$\delta$-interleaved.  

The equivariant analogue of the homotopy interleaving distance was recently introduced by Halder, Sau, and Sen in \cite{debashis2026equivariant_persistent}. Independently and concurrently, we were developing a related notion in the context considered in this paper, without knowledge of their work at the time. 
\begin{definition}
    The \emph{$G$-homotopy interleaving distance} $d_{HI}^G$ between persistent $G$-spaces $X_{\bullet}$ and $Y_{\bullet}$ is defined as follows:
    \[d_{HI}^G(X_{\bullet},Y_{\bullet}):=\mathrm{inf}\{d_{I}^{Top^G}(X_{\bullet}',Y_{\bullet}')\mid X_{\bullet}'\simeq^G X_{\bullet}, Y_{\bullet}'\simeq^G Y_{\bullet}\}.\]
\end{definition}

\begin{definition}
    Given a $G$-space $T$ and a $G$-equivariant function (not necessarily continuous) $\gamma:T\to \mathbb R$, we can define the \emph{sublevel filtration} $\mathcal S(\gamma):\mathbb R\to \mathrm{Top}^G$ by 
    $$\mathcal S(\gamma)_r=\gamma^{-1}(-\infty,r].$$
     Clearly, $\mathcal S(\gamma)_r$ is a $G$-invariant subset of $T$, since $\gamma$ is a $G$-function.
\end{definition}
For two function $\gamma, \gamma':T\to \mathbb R$, we define $$d_\infty(\gamma, \gamma')=\sup_{x\in T}|\gamma(x)-\gamma'(x)|.$$

By a distance, we mean an extended pseudometric. 
\begin{definition}
    A  $G$-invariant distance $d^G$ on $G$-spaces is
    \begin{enumerate}
        \item \emph{stable} if for all $G$-spaces $X$ and $G$-functions $\gamma,\gamma':X\to \R$,
        $$d^G(\mathcal S(\gamma), \mathcal S(\gamma'))\leq d_{\infty}(\gamma,\gamma'),$$
        \item \emph{homotopy invariant} if $d^G(X,Y)=0$ for all $X\simeq^G Y.$
    \end{enumerate}
\end{definition}

Since $d_I^{\omega}(F,G)\leq d_{I}^{G\omega}(F,G)$ for all $F,G\in \mathcal{C}^G$ \cite[Proposition 4.4.8]{G-interleaving_distance}, we obtain the following proposition from \cite[Theorem 1]{MSZ24}.
\begin{remark}
    Let $\mathbf{I}$ be a categorical invariant. Then, we have the following inequality for any $G$-persistent object $X_\bullet, Y_{\bullet}\colon (\mathbb R,\leq)\ra \mathcal C^G$
    \[d_{E}(\mathbf{I}(X_{\bullet}),\mathbf{I}(Y_{\bullet}))\leq d_{E}^G(\mathbf{I}(X_{\bullet}),\mathbf{I}(Y_{\bullet}))\leq d_{I}^G(X_{\bullet},Y_{\bullet}),\] where $d_E$ denotes the errosion distance.
\end{remark}

From the definition, we note that 
\[d_{HI}(X_{\bullet},Y_{\bullet})\leq d_{HI}^G(X_{\bullet},Y_{\bullet}).\] 
It now follows from \cite[Theorem 2]{MSZ24} that 
\begin{remark}
    Let $\mathbf{I}$ be a categorical invariant of topological spaces such that it is invariant under pre- and post-composition with weak homotopy equivalences. Then for any persistent $G$-spaces $X_{\bullet}$ and $Y_{\bullet}$, we have 
    \[d_E(\mathbf{I}(X_{\bullet}), \mathbf{I}(Y_{\bullet}))\leq d_{HI}^G(X_{\bullet}, Y_{\bullet})\] and
    \[d_E(\mathbf{I}(\vr_{\bullet}(X), \mathbf{I}(\vr_{\bullet}(Y)))\leq 2 \cdot d_{GH}^G(X,Y).\]
\end{remark}

Let $X_{\bullet}\colon (\mathbb{R},\leq)\to \mathcal{C}^G$ be a persistent $G$-space. For $a\leq b$, suppose $X_{ab}\colon X_a\to X_b$ denote the corresponding $G$-equivariant structure map. The $G$-persistence $\mathbf{I}^G$-invariant is the function $\mathbf{I}_X^G\colon \mathscr{Int}\to \mathbb{R}_{\geq 0}$ defined by $\mathbf{I}_X^G([a,b]):=\mathbf{I}^G(X_{ab})$.

\begin{definition}\label{errosion}
Let $X_{\bullet}, Y_{\bullet}$ be $G$-persistent spaces. Let
$\mathbf{I}_X^G,\mathbf{I}_Y^G: (\mathscr{Int},\subseteq)\to (\mathbb{R}_{\geq0},\leq)$
be interval invariants associated to $X_{\bullet}$ and
$Y_{\bullet}$, respectively. For $\epsilon>0$, we call $\mathbf{I}_X^G$ and $\mathbf{I}_Y^G$ are \emph{$\epsilon$-erroded} if for every $[a,b]\in \mathscr{Int}$, 
\[\mathbf{I}_X^G([a,b])\geq \mathbf{I}_Y^G([a-\epsilon, b+\epsilon])\text{ and } \mathbf{I}_Y^G([a,b])\geq \mathbf{I}_X^G([a-\epsilon,b+\epsilon]).\]

The $G$-equivariant erosion distance between $X_{\bullet}$ and $Y_{\bullet}$ is defined by
\[d_E^G(X_{\bullet}, Y_{\bullet}):= \mathrm{inf}\big\{\epsilon>0\mid \mathbf{I}_X^G\text{ and }\mathbf{I}_Y^G \text{ are $\varepsilon$-eroded} \big\}.,\]
 with the convension that $d_E^G(X_{\bullet}, Y_{\bullet})=\infty$ if no such $\epsilon$ exists.
\end{definition}

The following theorem shows that the $G$-categorical invariant is $1$-Lipschitz stable and later we will strengthen this result by replacing equivariant interleaving distance by equivariant homotopy interleaving distance.

\begin{theorem}\label{E vs GI}
    Let $\mathbf{I}^G\colon \operatorname{Ob}(\mathcal C^G)\sqcup \operatorname{Mor}(\mathcal C^G) \longrightarrow (\mathbb R_{\geq 0},\leq)$ be $G$-equivariant categorical invariant of $\mathcal{C}^G$. Then for any persistent $G$-spaces $X_{\bullet}$ and $Y_{\bullet}$ in $\mathcal{C}$, we have 
    \[d_E^G(\mathbf{I}^G(X_{\bullet}), \mathbf{I}^G(Y_{\bullet}))\leq d_{I}^G(X_{\bullet}, Y_{\bullet}).\]
\end{theorem}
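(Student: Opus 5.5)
The plan is the standard erosion-versus-interleaving argument of \cite[Theorem 1]{MSZ24}, run inside $\mathcal{C}^G$. Take any $\epsilon$ with $X_\bullet$ and $Y_\bullet$ $G$-$\epsilon$-interleaved; it suffices to show that $\mathbf{I}^G(X_\bullet)$ and $\mathbf{I}^G(Y_\bullet)$ are $\epsilon$-eroded. Taking the infimum over such $\epsilon$ then gives $d_E^G\leq d_I^G$. If no such $\epsilon$ exists, the right-hand side is $\infty$ and there is nothing to prove.

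For persistent objects over $(\mathbb{R},\leq)$, I would first reduce to the case where the translations are shifts by $\epsilon$. Any $\epsilon$-translation $\Gamma$ satisfies $\Gamma(t)\leq t+\epsilon$, so we can post-compose with the structure maps $Y_{\Gamma(t)}\to Y_{t+\epsilon}$. These are $G$-equivariant morphisms, so by naturality we obtain $G$-equivariant morphisms
\[\phi_t\colon X_t\to Y_{t+\epsilon},\qquad \psi_t\colon Y_t\to X_{t+\epsilon}\]
natural in $t$. They satisfy
\[\psi_{t+\epsilon}\circ\phi_t=X_{t,t+2\epsilon},\qquad \phi_{t+\epsilon}\circ\psi_t=Y_{t,t+2\epsilon}.\]
(If one prefers to keep general $\Gamma,K$, the same argument goes through with $\Gamma(t),K(t)$ in place of $t+\epsilon$, using only $\Gamma(t)\leq t+\epsilon$.)

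Next, fix $[a,b]\in\mathscr{Int}$; we must show $\mathbf{I}^G(Y_{a-\epsilon,b+\epsilon})\leq \mathbf{I}^G(X_{ab})$, and the symmetric inequality follows the same way. The main computation is the factorization
\[Y_{a-\epsilon,b+\epsilon}=\phi_b\circ X_{ab}\circ\psi_{a-\epsilon},\]
i.e.\ the composite $Y_{a-\epsilon}\xrightarrow{\psi_{a-\epsilon}}X_a\xrightarrow{X_{ab}}X_b\xrightarrow{\phi_b}Y_{b+\epsilon}$. To verify it, use naturality of $\psi$ to rewrite $X_{ab}\circ\psi_{a-\epsilon}=\psi_{b-\epsilon}\circ Y_{a-\epsilon,b-\epsilon}$. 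Then apply the interleaving identity $\phi_b\circ\psi_{b-\epsilon}=Y_{b-\epsilon,b+\epsilon}$, and finish with functoriality of $Y_\bullet$. All maps involved are morphisms in $\mathcal{C}^G$, so axiom (2) of Definition~\ref{G Categorical Invariant}, with $f=\psi_{a-\epsilon}$, $g=X_{ab}$ and $h=\phi_b$, gives the required inequality. Exchanging the roles of $X$ and $Y$ gives the other one.

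The main obstacle is the bookkeeping in the reduction from general $(\Gamma,K)$-interleavings with sublinear projection $\omega$ to plain $\epsilon$-shifts. One has to check that an $\epsilon$-translation on $\mathbb{R}$ indeed satisfies $t\leq\Gamma(t)\leq t+\epsilon$, and that the modified maps remain natural and $G$-equivariant. Once that is settled, the rest is purely formal from the two axioms. Note that axiom (1) is used only to identify $\mathbf{I}^G$ on identities with the value on objects, so that degenerate intervals $[a,a]$ are covered by the same argument.
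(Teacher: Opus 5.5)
Your proposal is correct and follows the paper's route: the paper just defers to \cite[Theorem 1]{MSZ24}, whose proof is exactly your factorization $Y_{a-\epsilon,b+\epsilon}=\phi_b\circ X_{ab}\circ\psi_{a-\epsilon}$ followed by axiom (2), with every morphism now taken in $\mathcal{C}^G$. Your extra reduction from $(\Gamma,K)$-interleavings to $\epsilon$-shifts is valid for the standard sublinear projection $\omega_\Gamma=\sup_t(\Gamma(t)-t)$ on $\mathbb{R}$; this is the choice implicitly needed for the symmetric shifts in the erosion distance, and the one the paper intends.
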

\begin{proof}
The proof is identical to that of \cite[Theorem 1]{MSZ24}, with the ordinary categorical invariant and morphisms replaced by their $G$-equivariant counterparts in $\mathcal C^G$.
\end{proof}
The following lemma shows that $G$-categorical invariants are compatible with the $G$-weak equivalences appearing in the definition of $G$-$\delta$-homotopy interleavings and will be used in the proof of the stability result.
\begin{lemma}\label{I^G_property}
    Let $\mathbf{I}^G$ be a $G$-categorical invariant such that for any $G$-equivariant maps $X\xrightarrow{f} Y\xrightarrow{g} Z\xrightarrow{h}W$ in $\mathcal{C}^G$, if $g$ is a $G$-weak homotopy equivalence, then $\mathbf{I}^G(g\circ f)= \mathbf{I}^G(f)$ and $\mathbf{I}^G(h\circ g)=\mathbf{I}^G(h)$. If the $G$-persistent spaces $X_{\bullet}$ and $X_{\bullet}'$ are such that $X_{\bullet}\simeq^G X_{\bullet}'$, then $\mathbf{I}^G(X_{\bullet})=\mathbf{I}^G(X_{\bullet}')$.
\end{lemma}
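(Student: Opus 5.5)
The plan is to reduce to a single objectwise $G$-weak equivalence and then chase a naturality square. By definition of $X_{\bullet}\simeq^G X_{\bullet}'$, there is a persistent $G$-space $W_{\bullet}$ with objectwise $G$-weak equivalences $\eta\colon W_{\bullet}\Rightarrow X_{\bullet}$ and $\eta'\colon W_{\bullet}\Rightarrow X_{\bullet}'$. It therefore suffices to prove the following: if $\eta\colon W_{\bullet}\Rightarrow X_{\bullet}$ is an objectwise $G$-weak equivalence, then $\mathbf{I}^G(W_{\bullet})([a,b])=\mathbf{I}^G(X_{\bullet})([a,b])$ for every interval $[a,b]\in\mathscr{Int}$. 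Applying this to both $\eta$ and $\eta'$ and composing the two equalities gives $\mathbf{I}^G(X_{\bullet})=\mathbf{I}^G(W_{\bullet})=\mathbf{I}^G(X_{\bullet}')$.

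Fix $a\leq b$ and write $w_a^b\colon W_a\to W_b$ and $f_a^b\colon X_a\to X_b$ for the structure maps. Naturality of $\eta$ gives the commutative square
\[
\eta_b\circ w_a^b = f_a^b\circ \eta_a .
\]
Now apply the hypothesis twice.

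\begin{enumerate}
\item Take $g=\eta_b$, which is a $G$-weak homotopy equivalence, and precompose with $f=w_a^b$. The hypothesis gives $\mathbf{I}^G(\eta_b\circ w_a^b)=\mathbf{I}^G(w_a^b)$.
\item Take $g=\eta_a$ and postcompose with $h=f_a^b$. The hypothesis gives $\mathbf{I}^G(f_a^b\circ\eta_a)=\mathbf{I}^G(f_a^b)$.
\end{enumerate}

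Since the two composites coincide by the square above, $\mathbf{I}^G(w_a^b)=\mathbf{I}^G(f_a^b)$. This is exactly $\mathbf{I}^G(W_{\bullet})([a,b])=\mathbf{I}^G(X_{\bullet})([a,b])$.

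The only point requiring care is formal rather than substantive. The hypothesis is phrased for a chain $X\xrightarrow{f}Y\xrightarrow{g}Z\xrightarrow{h}W$, but each step above uses only one of the two outer maps. The unused slot can be filled with an identity map, so the hypothesis still applies. One should also note that the $G$-categorical invariance axioms themselves are not needed beyond this hypothesis; the whole argument is the naturality square together with two-sided invariance under $G$-weak equivalences. I expect no real obstacle. The only subtlety is that $\simeq^G$ is defined via a span rather than a direct map, which is handled by passing through $W_{\bullet}$ as described.
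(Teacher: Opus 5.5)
Your proposal is correct and matches the paper's proof: you pass through the span $W_{\bullet}$, use the naturality square $\eta_b\circ w_a^b=f_a^b\circ\eta_a$, and apply invariance under pre- and post-composition with the $G$-weak equivalences $\eta_b$ and $\eta_a$ to get $\mathbf{I}^G(w_a^b)=\mathbf{I}^G(f_a^b)$, then do the same for $X_{\bullet}'$. Your remark about filling the unused slot of the hypothesis with an identity map is a bookkeeping detail that the paper leaves implicit.
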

\begin{proof}
    Since $X_{\bullet}\simeq^G X'{\bullet}$, there exists a $G$-persistent space $W_{\bullet}$ and objectwise $G$-weak equivalences $\phi\colon W_{\bullet}\Rightarrow X_{\bullet}$ and $\psi\colon W_{\bullet}\Rightarrow X_{\bullet}'$. This implies that for each $t\in \mathbb{R}$, the maps $\phi_t\colon W_t\to X_t$ and $\psi_t\colon W_t\to X_t'$ are $G$-weak homotopy equivalence. Using this, it follows from the following commutative diagram
\[\begin{tikzcd}
	{Z_t} & {Z_s} \\
	{X_t} & {X_s}
	\arrow["{g_t^s}", from=1-1, to=1-2]
	\arrow["{\phi_t}"', "\simeq",from=1-1, to=2-1]
	\arrow["{\phi_s}"', "\simeq",from=1-2, to=2-2]
	\arrow["{f_t^s}"', from=2-1, to=2-2]
\end{tikzcd}\]
    that   \[\begin{aligned}
\mathbf{I}_X^G([t,s])
&=\mathbf{I}^G(f_s^t) =\mathbf{I}^G(f_s^t\circ \phi_t) =\mathbf{I}^G(\phi_s\circ g_s^t) =\mathbf{I}^G(g_s^t) =\mathbf{I}_W^G([t,s]).
\end{aligned}\]
Hence $\mathbf{I}^G(X_{\bullet})= \mathbf{I}^G(W_{\bullet})$. Similarly, we can show that $I(X_{\bullet}')= \mathbf{I}^G(W_{\bullet})$. Consequently, $\mathbf{I}^G(X_{\bullet})= \mathbf{I}^G(X_{\bullet}')$.
\end{proof}

\begin{proposition}\label{prop: d^G VR leq 2d_GH}
    A stable and $G$-homootopy invariant distance $d^G$ on $G$-equivariant metric spaces satisfies $$d^G(\VR X,\VR Y)\leq2\cdot d_{GH}^G(X,Y).$$
\end{proposition}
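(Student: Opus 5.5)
The plan is to follow the standard argument of Blumberg--Lesnick and Mémoli--Zhou: realize both Vietoris--Rips filtrations, up to $G$-weak equivalence, as sublevel filtrations of two $G$-functions on a single $G$-space, and then use stability. Concretely, fix $\epsilon>2\,d_{GH}^G(X,Y)$. By the definition of $d_{GH}^G$, there is a $G$-invariant correspondence $C\subseteq X\times Y$ with distortion
\[
\operatorname{dis}(C)=\sup_{(x,y),(x',y')\in C}\bigl|d_X(x,x')-d_Y(y,y')\bigr|<\epsilon .
\]
Since $C$ is $G$-invariant, it carries the diagonal $G$-action. We then form the $G$-space $T$ given by the geometric realization of the full simplex $\Delta(C)$ on vertex set $C$, so that $G$ acts simplicially. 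Define $\gamma_X\colon T\to\R$ by sending the open cell of a simplex $\sigma=\{(x_0,y_0),\dots,(x_k,y_k)\}$ to $\max_{i,j} d_X(x_i,x_j)$, and define $\gamma_Y$ analogously using $d_Y$. Because $G$ acts by isometries and $C$ is $G$-invariant, both functions are $G$-equivariant (here $G$ acts trivially on $\R$). The distortion bound gives $d_\infty(\gamma_X,\gamma_Y)\le\operatorname{dis}(C)<\epsilon$, so stability yields $d^G(\mathcal S(\gamma_X),\mathcal S(\gamma_Y))<\epsilon$.

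The key step is to show $\mathcal S(\gamma_X)\simeq^G\VR X$, and symmetrically for $Y$, so that homotopy invariance gives $d^G(\mathcal S(\gamma_X),\VR X)=0$. Then the triangle inequality for the pseudometric $d^G$ gives $d^G(\VR X,\VR Y)<\epsilon$, and letting $\epsilon\downarrow 2d_{GH}^G(X,Y)$ completes the proof. Note that $\mathcal S(\gamma_X)_r$ is precisely the Vietoris--Rips complex $\vr_r(C,\,d_X\circ\pi_X)$ on the pseudometric space $C$. The projection $\pi_X\colon C\to X$ is surjective and $G$-equivariant, and it induces a simplicial $G$-map $p_r\colon \mathcal S(\gamma_X)_r\to\vr_r(X)$ that is natural in $r$. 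It therefore suffices to show that each $p_r$ is a $G$-weak equivalence, meaning that $p_r^H$ is a weak equivalence for every subgroup $H$. Then $W_\bullet=\mathcal S(\gamma_X)$, together with the identity and $p_\bullet$, exhibits $\mathcal S(\gamma_X)\simeq^G \VR X$.

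The main obstacle is this $G$-weak equivalence. Non-equivariantly, $p_r$ is a homotopy equivalence: choose a section $s\colon X\to C$ of $\pi_X$; then $p_r\circ s=\id$, and $s\circ p_r$ is contiguous to the identity, because a simplex $\sigma$ together with $s(\pi_X\sigma)$ still has $d_X$-diameter at most $r$. Equivariantly, a $G$-equivariant section need not exist, since stabilizers in $C$ can be smaller than in $X$. To handle this, I would pass to fixed points. For a simplicial $G$-complex, the $H$-fixed set of the realization can be analyzed via barycentric subdivision, where $(\Delta(C)_r)^H$ is the realization of the subcomplex of $H$-invariant simplices. In a Vietoris--Rips-type complex, an $H$-invariant simplex $\sigma$ is one whose vertex set is an $H$-invariant finite set of diameter at most $r$. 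I would then show that $p_r^H$ is a homotopy equivalence by a Quillen fiber-type argument on the face posets of $H$-invariant simplices: the fiber over an $H$-invariant simplex $\tau$ of $\vr_r(X)$ consists of $H$-invariant simplices $\sigma$ of $\Delta(C)_r$ with $\pi_X\sigma\subseteq\tau$. This fiber is nonempty, because the full preimage $\pi_X^{-1}(\tau)$ is $H$-invariant, and it is a cone with apex $\pi_X^{-1}(\tau)$, hence contractible. If infiniteness of $\pi_X^{-1}(\tau)$ causes trouble, I would instead take the union of the $H$-orbits of finitely many chosen lifts, which is still finite when $G$ is finite, and still has diameter at most $r$ because the $d_X$-distances are all attained in $\tau$. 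If this fixed-point step fails for infinite $G$, I would restrict to finite (or compact Lie, with orbit-wise lifts) $G$, which is the setting of the applications.
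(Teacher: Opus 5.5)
Your proposal is correct and follows the same route as the paper: a $G$-invariant correspondence $C$, sublevel filtrations of two $G$-functions on the full simplex over $C$, stability, an objectwise $G$-equivalence to $\VR X$ induced by $\pi_X$, homotopy invariance, and the triangle inequality. Two differences are improvements: using the full $d_X$-diameter avoids the paper's half-diameter filtration and its rescaling to $F^X_{\bullet/2}$, and your fixed-point/Quillen-fiber check supplies the verification that the paper leaves implicit when it simply cites the equivariant Quillen Theorem A.

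The finiteness of the $H$-invariant lifts is genuinely needed, so drop the compact Lie fallback. Take $G=\mathbb{S}^1$ rotating $Y=\mathbb{S}^1$ and $X$ a point. The only $G$-invariant correspondence is $X\times Y$, and no finite subset of $C\cong\mathbb{S}^1$ is $\mathbb{S}^1$-invariant, so $\mathcal{S}(\gamma_X)_r$ and $\vr_r(Y)$ have empty $\mathbb{S}^1$-fixed sets while $\vr_r(X)$ is a point. Hence $p_r$ is not a $G$-weak equivalence, and the stated inequality itself fails for $d^G=d^G_{HI}$, since the left side is $\infty$. The paper's unqualified statement silently requires the same restriction to finite $G$.
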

\begin{proof}
    Let $X, Y$ be two metric $G$-spaces with $\dist{GH}{G}{X}{Y}<\delta.$
    Then there exists $G$-invariant subset $C\subseteq X\times Y$ with $|d_X^G(x,x')-d_Y^G(y,y')|\leq2\delta,$ for all $(x,y), (x',y')\in C.$
    Let $[C]$ be the full simplicial complex formed by the vertex set $C.$ We note that $[C]$ is a $G$-space. We also have the $G$-equivariant projection maps $p_X\colon C\to X$ and $p_Y\colon C\to Y.$

    We define a filtration $F_\bullet^X$ on $[C]$ where $\sigma\in F^X_r$ if and only if $\dist{X}{G}{p_X(u)}{p_X(v)}\leq 2r$ for all $u,v \in \sigma.$
    We note that for all $gu,gv \in g\sigma$,
    \begin{align*}
        d_X^G(p_X(gu),p_X(gv))&= d_X^G(gp_X(u),gp_X(v)) & \text{ (since the projection } p_X \text{ is } G\text{-equivariant)}\\
        &= d_X^G(p_X(u),p_X(v)) & \text{ (since the metric } d_X^G \text{ is } G\text{-equivariant)}\\
        &\leq 2r.
    \end{align*}
    Thus $F_\bullet^X$ is a $G$-simplicial filtration.
    This induces a $G$-function $\gamma^X\colon [C]\to \R$ given by $$\gamma^X(\sigma)=\min\{r\in \R\mid \sigma\in F_r^X\}.$$ Therefore $F_\bullet^X=\mathcal S(\gamma^X)$ is the $G$-simplicial sublevel filtration.
    Similarly, we define a filtration $F_\bullet^Y$ so that $F_\bullet^Y=\mathcal S(\gamma^Y).$

     We have $\dist{\infty}{}{\gamma^X}{\gamma^Y}=\sup_{\sigma\in [C]} |\gamma^X(\sigma)-\gamma^Y(\sigma)|\leq \delta.$
     By stability assumption for $d^G$, we obtain
     $$d^G(F_\bullet^X,F_\bullet^Y)=d^G(\mathcal S(\gamma^X),\mathcal S(\gamma^Y))\leq d_\infty(\gamma^X,\gamma^Y)\leq \delta.$$
     We note that $p_X:C\to X$ induces a $G$-equivariant simplicial map $g_r:F^X_{r/2}\to \vr_r(X)$. 
     The equivariant Quillen's Theorem A \cite[Theorem A.2]{Bergner_eqTree} says that if $f:S\to T$ is a $G$-simplicial map of $G$-simplicial complexes such that $(f^H)^{-1}(\sigma)$ is contractible for all subgroups $H\leq G$, then $f$ is a $G$-homotopy equivalence. By Quillen's Theorem A,  we conclude $g_r$ is a $G$-homotopy equivalence for all $r\geq 0.$ Thus $g: F_{\bullet/2}^X\to \VR X$ is an objectwise $G$-homotopy equivalence.
     Similarly, by considering the map $F^Y_{r}\to \vr_r(Y)$ induced from $p_Y:C\to Y$, we obtain a $G$-homotopy equivalence  $F_{\bullet/2}^Y\to \VR Y$.
     Using the $G$-homootopy invariance assumption for $d^G$, we have $d^G(\VR X ,F_{\bullet/2}^X)=0$ and $d^G(\VR Y,F_{\bullet/2}^Y)=0.$

     Using the triangle inequality, this yields
     $$d^G(\VR X, \VR Y)\leq d^G(\VR X ,F_{\bullet/2}^X)+ d^G(F_{\bullet/2}^X,F_{\bullet/2}^Y) +d^G(\VR Y,F_{\bullet/2}^Y)\leq2\delta.$$
     Since the inequality holds for every $\delta>d_{GH}^G(X,Y)$, passing through the infimum, we get $d^G(\VR X, \VR Y)\leq 2d_{GH}^G(X,Y).$ 
\end{proof}
\begin{theorem}\label{thm: stability invariance}
$d_{HI}^G$ is a distance on $G$-spaces satisfies: \begin{enumerate}
    \item stability,
    \item $G$-homotopy invariance.
\end{enumerate}
\end{theorem}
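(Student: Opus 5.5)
The plan has three parts: show $d_{HI}^G$ is an extended pseudometric, then check stability, then check $G$-homotopy invariance. Throughout, the interleaving distance $d_I^{Top^G}$ between persistent $G$-spaces is taken with respect to the standard translations $\Gamma_\delta(t)=t+\delta$ on $(\mathbb R,\leq)$.

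\emph{Pseudometric.} Symmetry and $d_{HI}^G(X_\bullet,X_\bullet)=0$ are immediate, taking $X'_\bullet=Y'_\bullet=X_\bullet$ and the identity interleaving. The triangle inequality is the delicate point, exactly as in \cite{BL}. Given $G$-$\delta_1$-interleaved replacements $X'\simeq^G X$, $Y'\simeq^G Y$ and $G$-$\delta_2$-interleaved replacements $Y''\simeq^G Y$, $Z''\simeq^G Z$, we must produce a single pair of replacements that is $G$-$(\delta_1+\delta_2+\epsilon)$-interleaved. I would follow Blumberg--Lesnick and work in the projective model structure on $(Top^G)^{(\mathbb R,\leq)}$. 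Here $Top^G$ carries the fine model structure, with weak equivalences and fibrations detected on all $H$-fixed points. Since this is a cofibrantly generated model category, the projective structure on diagrams exists.

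In that homotopy category, $\simeq^G$ is the relation of being isomorphic. Moreover, a $\delta$-homotopy interleaving can be replaced by an honest $\delta$-interleaving between cofibrant-fibrant replacements. This is the equivariant version of \cite[Prop.~5.6/Thm.~4.?]{BL}, where interleavings are characterized through the shift functor commuting with cofibrant replacement. Composing interleavings then gives the triangle inequality. I expect this to be the main obstacle: one has to check that the shift functors and cofibrant replacement are compatible with the $G$-action. They are, because every construction is functorial in $Top^G$ and the generating cofibrations $G/H\times \partial D^n\to G/H\times D^n$ are shifted along with everything else. If needed, I would instead cite \cite{debashis2026equivariant_persistent}, where this metric property is established.

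\emph{Stability.} Let $\gamma,\gamma'\colon X\to\mathbb R$ be $G$-functions with $d_\infty(\gamma,\gamma')=\delta$. Then $\gamma^{-1}(-\infty,r]\subseteq\gamma'^{-1}(-\infty,r+\delta]$, and symmetrically with the roles exchanged. These inclusions are $G$-equivariant because both sublevel sets are $G$-invariant subsets of $X$. They commute with the structure inclusions and compose to the internal inclusions $r\to r+2\delta$, so they form a $G$-$\delta$-interleaving between $\mathcal S(\gamma)$ and $\mathcal S(\gamma')$. Taking $X'_\bullet=\mathcal S(\gamma)$ and $Y'_\bullet=\mathcal S(\gamma')$ themselves gives $d_{HI}^G(\mathcal S(\gamma),\mathcal S(\gamma'))\leq d_I^{Top^G}\leq\delta$.

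\emph{$G$-homotopy invariance.} Suppose $X_\bullet\simeq^G Y_\bullet$. Choose $X'_\bullet=Y'_\bullet=Y_\bullet$. Then $Y_\bullet\simeq^G Y_\bullet$ trivially, and $Y_\bullet\simeq^G X_\bullet$ by hypothesis via the span $W_\bullet$. The distance $d_I^{Top^G}(Y_\bullet,Y_\bullet)=0$, so $d_{HI}^G(X_\bullet,Y_\bullet)=0$. This step uses that $\simeq^G$ is symmetric, which holds because the definition is a span. Making $\simeq^G$ transitive, which is needed implicitly in the triangle inequality, again relies on the model-categorical argument above: zigzags of objectwise $G$-weak equivalences can be shortened to spans via cofibrant replacement.
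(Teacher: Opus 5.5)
Your proposal is correct and follows essentially the same route as the paper. Stability comes from the $G$-equivariant sublevel-set inclusions forming a $G$-$\delta$-interleaving, and $G$-homotopy invariance from taking both replacements to be the same persistent $G$-space. The triangle inequality is deferred to the Blumberg--Lesnick argument of \cite[Section 4.2]{BL}; you sketch this in more detail via the projective model structure on $(\mathrm{Top}^G)^{(\mathbb R,\leq)}$, and you rightly note that transitivity of $\simeq^G$ is also needed.

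One point in that sketch should be phrased carefully. Composing interleavings requires the same middle object in both, so what you need is a one-sided rectification: an honest interleaving with a prescribed cofibrant replacement of $Y_\bullet$ on one side. Honest interleavings between arbitrary replacements on both sides will not do, since two cofibrant replacements of the same object need not even be isomorphic, i.e.\ $0$-interleaved.
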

\begin{proof}
It is clear from the definition of $d_{HI}^G$ is non-negative, symmetric and $d_{HI}^G(X,X)=0$ for any $G$-space $X$. The triangle inequality follows from similar arguments in \cite[section 4.2]{BL}.

    (1) Let $\gamma, \gamma'\colon X\to \R$ be two $G$-maps with $d_\infty(\gamma,\gamma')\leq \epsilon.$
    We have the inclusion of sublevel sets
    $$\cs(\gamma)_r=\{x\in X|\gamma(t)\leq r\}\subseteq\{x\in X|\gamma'(t)\leq r+\epsilon\}=\cs(\gamma')_{r+\epsilon}.$$
    We denote this inclusions by $\phi_r\colon \cs(\gamma)_r\to \cs(\gamma')_{r+\epsilon}.$
Symmetrically, we get inclusions $\psi_r:\cs(\gamma')_r\to \cs(\gamma)_{r+\epsilon}.$
The compositions $$\psi_{r+\epsilon}\circ\phi_r=\cs(\gamma)(r \leq r+2\epsilon)$$ and $$\phi_{r+\epsilon}\circ\psi_r=\cs(\gamma')(r \leq r+2\epsilon).$$ Consequently, $d_I^G(\cs(\gamma),\cs(\gamma'))\leq \epsilon.$
Therefore $d_{HI}^G(\cs(\gamma),\cs(\gamma'))\leq d_I^G(\cs(\gamma),\cs(\gamma'))\leq \epsilon.$
Thus $d_{HI}^G(\mathcal S(\gamma), \mathcal S(\gamma'))\leq d_\infty(\gamma, \gamma').
$

(2) For two $G$-spaces $X, Y$ with $X\simeq_G Y$, we have $d_{HI}^G(X,Y)\leq d_I(X,X)=0$ by the definition of homotopy interleaving distance.
\end{proof}
As a consequence of Proposition~\ref{prop: d^G VR leq 2d_GH} and Theorem~\ref{thm: stability invariance}, we establish the following.
\begin{corollary}\label{thm: HI between VR leq 2d_GH}
For $G$-metric spaces $X$ and $Y$, 
     $$d_{HI}^G(\VR X,\VR Y)\leq2\cdot d_{GH}^G(X,Y).$$
\end{corollary}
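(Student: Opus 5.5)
The plan is to obtain the corollary as a direct specialization of Proposition~\ref{prop: d^G VR leq 2d_GH}, taking $d^G = d_{HI}^G$. That proposition applies to any $G$-invariant distance on persistent $G$-spaces that is both stable and $G$-homotopy invariant. So the argument reduces to checking that $d_{HI}^G$ has exactly these properties, and Theorem~\ref{thm: stability invariance} supplies them.

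The steps, in order, are as follows.

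First, I would recall from Theorem~\ref{thm: stability invariance} that $d_{HI}^G$ is an extended pseudometric. Non-negativity, symmetry and vanishing on the diagonal are immediate from the definition. The triangle inequality follows by adapting \cite[Section 4.2]{BL}: one composes zigzags of objectwise $G$-weak equivalences and interleavings, and every construction there (homotopy colimits and pullback replacements) can be performed in $\mathrm{Top}^G$.

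Second, I would check stability. For $G$-functions $\gamma,\gamma'\colon T\to\R$, Theorem~\ref{thm: stability invariance}(1) gives
\[
d_{HI}^G(\mathcal S(\gamma),\mathcal S(\gamma'))\leq d_\infty(\gamma,\gamma').
\]
The inclusions of sublevel sets used there are $G$-equivariant because the sublevel sets are $G$-invariant.

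Third, I would check $G$-homotopy invariance. By Theorem~\ref{thm: stability invariance}(2), $d_{HI}^G(X_\bullet,Y_\bullet)=0$ whenever $X_\bullet\simeq^G Y_\bullet$.

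With these three facts in hand, I would substitute $d^G=d_{HI}^G$ into Proposition~\ref{prop: d^G VR leq 2d_GH}. Its proof takes an equivariant correspondence $C$ with $\dist{GH}{G}{X}{Y}<\delta$, forms the full complex $[C]$ with the two $G$-sublevel filtrations $F^X_\bullet$ and $F^Y_\bullet$, and compares these to $\VR X$ and $\VR Y$ through objectwise $G$-homotopy equivalences given by equivariant Quillen's Theorem A. It then closes with the triangle inequality and lets $\delta\downarrow d_{GH}^G(X,Y)$. This yields
\[
d_{HI}^G(\VR X,\VR Y)\leq 2\,d_{GH}^G(X,Y).
\]

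The only genuine obstacle is a compatibility check in the homotopy-invariance step. Proposition~\ref{prop: d^G VR leq 2d_GH} provides objectwise $G$-\emph{homotopy} equivalences $F^X_{\bullet/2}\to\VR X$, whereas $\simeq^G$ was defined through spans of objectwise $G$-\emph{weak} equivalences. I would close this gap in two moves. A $G$-homotopy equivalence induces homotopy equivalences on all fixed-point sets, so it is a $G$-weak equivalence. The span needed for $\simeq^G$ is then obtained by taking $W_\bullet=F^X_{\bullet/2}$ itself, with the identity as one leg.

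A secondary point to note is the rescaling $r\mapsto r/2$ in the filtrations. The sublevel filtrations $F^X_\bullet$ and $F^Y_\bullet$ are $\delta$-interleaved, so the reindexed $F^X_{\bullet/2}$ and $F^Y_{\bullet/2}$ are $2\delta$-interleaved. This factor is exactly what produces the constant $2$ in the bound.
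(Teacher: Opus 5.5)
Your proposal is correct and follows the same route as the paper: the corollary is obtained by specializing Proposition~\ref{prop: d^G VR leq 2d_GH} to $d^G=d_{HI}^G$, with the distance axioms, stability and $G$-homotopy invariance supplied by Theorem~\ref{thm: stability invariance}. Your two additional checks are correct and make explicit details the paper leaves implicit. The first is that the objectwise $G$-homotopy equivalence $F^X_{\bullet/2}\to\VR X$ gives the span required for $\simeq^G$, with the identity as the other leg. The second is that reindexing by $r\mapsto r/2$ turns the $\delta$-interleaving into a $2\delta$-interleaving.
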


The following theorem is an equivariant analogue of \cite[Theorem 2]{MSZ24} which is strengthening the Theorem~\ref{E vs GI}. We also establish the stability of $G$-categorical invariants with respect to the equivariant Gromov--Hausdorff distance, building on Corollary~\ref{thm: HI between VR leq 2d_GH}.

\begin{theorem}\label{erosion_vs_homotopy_interleaving}
 Let $\mathbf{I}^G$ be a $G$-categorical invariant of topological spaces such that $\mathbf{I}^G$ is invariant under pre- and post-composition with $G$-weak homotopy equivalences. Then for persistent $G$-spaces $X_{\bullet}$ and $Y_{\bullet}$, we have 
    \begin{equation}
        d_E^G(\mathbf{I}^G(X_{\bullet}), \mathbf{I}^G(Y_{\bullet}))\leq d_{HI}^G(X_{\bullet}, Y_{\bullet}).
    \end{equation}

    For compact $G$-metric spaces $(X,d_X)$ and $(Y,d_Y)$, we have 
    \begin{equation}\label{eq2}
        d_E^G(\mathbf{I}^G(\vr_{\bullet}(X)), \mathbf{I}^G(\vr_{\bullet}(Y)))\leq 2\cdot d_{GH}^G(X,Y).
    \end{equation}
    \end{theorem}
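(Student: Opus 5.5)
The first inequality comes from combining Lemma~\ref{I^G_property} with Theorem~\ref{E vs GI}; the second from the first together with Corollary~\ref{thm: HI between VR leq 2d_GH}.

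\textbf{First inequality.} Fix $\delta > d_{HI}^G(X_\bullet,Y_\bullet)$. By definition of $d_{HI}^G$, there are persistent $G$-spaces $X'_\bullet \simeq^G X_\bullet$ and $Y'_\bullet \simeq^G Y_\bullet$ with $d_I^G(X'_\bullet,Y'_\bullet) < \delta$.

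The hypothesis on $\mathbf{I}^G$ is exactly what Lemma~\ref{I^G_property} requires: invariance under pre- and post-composition with $G$-weak homotopy equivalences. So the lemma gives equalities of interval functions
\[
\mathbf{I}^G(X_\bullet)=\mathbf{I}^G(X'_\bullet), \qquad \mathbf{I}^G(Y_\bullet)=\mathbf{I}^G(Y'_\bullet).
\]
Hence the erosion distance is unchanged when we pass to the primed spaces.

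Theorem~\ref{E vs GI} applied to $X'_\bullet, Y'_\bullet$ then gives
\[
d_E^G(\mathbf{I}^G(X_\bullet),\mathbf{I}^G(Y_\bullet)) = d_E^G(\mathbf{I}^G(X'_\bullet),\mathbf{I}^G(Y'_\bullet)) \le d_I^G(X'_\bullet,Y'_\bullet) < \delta.
\]
Letting $\delta \downarrow d_{HI}^G$ gives the first inequality.

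\textbf{Checking the lemma applies.} Lemma~\ref{I^G_property} is stated for a zigzag $X_\bullet \leftarrow W_\bullet \to X'_\bullet$ of objectwise $G$-weak equivalences. This matches the definition of $\simeq^G$, so no further work is needed. Its proof uses the naturality square $\phi_s\circ g_t^s = f_t^s\circ\phi_t$ and applies the hypothesis once on each side, first with $g=\phi_t$ as a precomposition and then with $g=\phi_s$ as a postcomposition.

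\textbf{Second inequality.} The spaces $\vr_\bullet(X)$ and $\vr_\bullet(Y)$ are persistent $G$-spaces by the remark on Vietoris--Rips complexes. Applying the first inequality to them and then Corollary~\ref{thm: HI between VR leq 2d_GH} gives
\[
d_E^G(\mathbf{I}^G(\vr_\bullet X),\mathbf{I}^G(\vr_\bullet Y)) \le d_{HI}^G(\vr_\bullet X,\vr_\bullet Y) \le 2\, d_{GH}^G(X,Y).
\]

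\textbf{Main obstacle.} The step I expect to be the real content is the one behind Corollary~\ref{thm: HI between VR leq 2d_GH}. There, Proposition~\ref{prop: d^G VR leq 2d_GH} produces objectwise $G$-homotopy equivalences $F^X_{\bullet/2}\to\vr_\bullet(X)$, using the equivariant Quillen Theorem A. One must check these are natural transformations of persistent $G$-spaces, so that they genuinely witness $\simeq^G$ (a one-leg zigzag with $W_\bullet=F^X_{\bullet/2}$). One must also check that a $G$-homotopy equivalence is in particular a $G$-weak equivalence, which holds because it restricts to homotopy equivalences on fixed points.

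Naturality holds because the maps $g_r$ are all induced by the single projection $p_X$ and commute with the inclusions. Compactness of $X$ and $Y$ is only needed so that $d_{GH}^G$ is finite and the correspondence $C$ exists. With these checks in place, the argument is a formal concatenation of the earlier results.
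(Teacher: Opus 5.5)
Your proposal is correct and follows the paper's own proof. Lemma~\ref{I^G_property} lets you replace $X_\bullet, Y_\bullet$ by $G$-weakly equivalent $X'_\bullet, Y'_\bullet$ without changing the interval invariants, Theorem~\ref{E vs GI} bounds the erosion distance by $d_I^G(X'_\bullet,Y'_\bullet)$, passing to the infimum (your $\delta$-argument) gives the first inequality, and Corollary~\ref{thm: HI between VR leq 2d_GH} then gives the Vietoris--Rips bound. One small point about your side remarks on that corollary: compactness is not what makes the correspondence $C$ exist. For any $\delta>d_{GH}^G(X,Y)$, a $G$-invariant correspondence $C$ exists by the definition of the infimum, and if $d_{GH}^G(X,Y)=\infty$ the inequality is trivial.
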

\begin{proof}
 Let $X_{\bullet}', Y_{\bullet}'$ be two persistent $G$-spaces such that $X_{\bullet}\simeq^G X_{\bullet}'$ and $Y_{\bullet}\simeq^{G} Y_{\bullet}'$. Then by Lemma~\ref{I^G_property}, we have $d_E^G(\mathbf{I}^{G}(X_{\bullet}),\mathbf{I}^{G}(Y_{\bullet}))=d_E^G(\mathbf{I}^{G}(X_{\bullet}'),\mathbf{I}^{G}(Y_{\bullet}'))$. Hence it follows from Theorem~\ref{E vs GI} that $d_E^G(\mathbf{I}^{G}(X_{\bullet}), \mathbf{I}^{G}(Y_{\bullet}))\leq d_{I}^G(X_{\bullet}',Y_{\bullet}')$. Therefore, by the definition of the homotopy $G$-interleaving distance $d_E^G(\mathbf{I}^G(X_{\bullet}), \mathbf{I}^G(Y_{\bullet}))\leq d_{HI}^G(X_{\bullet}, Y_{\bullet})$.

    Combining this inequality with Corollary~\ref{thm: HI between VR leq 2d_GH}, we obtain \[d_E^G(\mathbf{I}^G(\vr_{\bullet}(X)), \mathbf{I}^G(\vr_{\bullet}(Y)))\leq 2\cdot d_{GH}^G(X,Y).\]
\end{proof}

\begin{corollary}\label{corollary: erosion bounds}
Let $\mathbf{I}^G=\cl_G$, $\zcl_G, \cat_G$ or $\TC_G$.
    Suppose $X_{\bullet}$ and $Y_{\bullet}$ be two persistence $G$-spaces with $G$-CW complex structures. Then 
    \[d_{E}^G(\mathbf{I}^G(X_{\bullet}),\mathbf{I}^G(Y_{\bullet}))\leq d_{HI}^G(X_{\bullet},Y_{\bullet}).\]

    Moreover, for compact $G$-metric spaces $X$ and $Y$, we have 
    \[d_E^G(\mathbf{I}^G(\vr_{\bullet}(X)),\mathbf{I}^G(\vr_{\bullet}(Y)))\leq 2\cdot d_{GH}^G(X,Y).\]
\end{corollary}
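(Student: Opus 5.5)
The corollary should follow from Theorem~\ref{erosion_vs_homotopy_interleaving} once we check its two hypotheses for each $\mathbf{I}^G\in\{\cl_G,\zcl_G,\cat_G,\TC_G\}$. The first hypothesis is that $\mathbf{I}^G$ is a $G$-categorical invariant. This is exactly Propositions~\ref{Categorical invariance} and \ref{Categorical invariance 2}. The second hypothesis is invariance under pre- and post-composition with $G$-weak homotopy equivalences. On $G$-CW complexes, the equivariant Whitehead theorem says a $G$-weak equivalence is a $G$-homotopy equivalence. So every comparison map in a zigzag $X_\bullet\leftarrow W_\bullet\rightarrow X'_\bullet$ between such persistent spaces has a $G$-homotopy inverse. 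It therefore suffices to prove: if $g\colon Y\to Z$ is a $G$-homotopy equivalence, then $\mathbf{I}^G(g\circ f)=\mathbf{I}^G(f)$ and $\mathbf{I}^G(h\circ g)=\mathbf{I}^G(h)$.

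For this I would first show that each invariant is unchanged under $G$-homotopy of maps. For $\TC_G$ this is Lemma~\ref{properties_of_tc_of_maps}(7). For $\cat_G$ it follows from the definition: if $f_1\simeq_G f_2$, compose the $G$-homotopy with the categorical deformation on each $U_i$. For $\cl_G$ and $\zcl_G$, $G$-homotopic maps induce homotopic maps on Borel constructions, hence equal maps $f_G^{h*}$ in Borel cohomology, and $(f\times f)$ changes only up to $G$-homotopy as well. Both invariants are defined purely through these induced homomorphisms, so they agree.

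Now let $\bar g$ be a $G$-homotopy inverse of $g$. The composition inequality in Definition~\ref{G Categorical Invariant}(2), with identities inserted, gives $\mathbf{I}^G(g\circ f)\le\mathbf{I}^G(f)$. Conversely, by homotopy invariance and the same inequality,
\[\mathbf{I}^G(f)=\mathbf{I}^G(\bar g\circ g\circ f)\le \mathbf{I}^G(g\circ f).\]
The case $h\circ g$ is symmetric: $\mathbf{I}^G(h)=\mathbf{I}^G(h\circ g\circ\bar g)\le\mathbf{I}^G(h\circ g)\le\mathbf{I}^G(h)$.

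For the first inequality, Lemma~\ref{I^G_property} and Theorem~\ref{erosion_vs_homotopy_interleaving} then apply, with the infimum defining $d_{HI}^G$ taken over $G$-CW replacements. For the Vietoris--Rips statement, $\vr_r(X)$ is a $G$-simplicial complex, hence a $G$-CW complex after barycentric subdivision if needed. The bound then follows from Corollary~\ref{thm: HI between VR leq 2d_GH} combined with the first part, as in \eqref{eq2}.

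The main obstacle is the gap between $G$-weak equivalences and $G$-homotopy equivalences. The intermediate spaces $W_\bullet$ used in $d_{HI}^G$ need not be $G$-CW, so Whitehead does not apply to them directly. I would handle this in one of two ways. For $\cl_G$ and $\zcl_G$, a $G$-weak equivalence already induces an isomorphism in Borel cohomology, since $EG\times_G(-)$ preserves weak equivalences, so no CW hypothesis is needed. For $\cat_G$ and $\TC_G$, I would restrict the infimum to $G$-CW replacements, using objectwise $G$-CW approximation functorially: cofibrant replacement in the projective model structure on $(\mathrm{Top}^G)^{(\mathbb R,\le)}$. One then checks that this does not change $d_{HI}^G$.
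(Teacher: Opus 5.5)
Your argument is correct and follows the paper's route: the equivariant Whitehead theorem together with Propositions~\ref{Categorical invariance} and~\ref{Categorical invariance 2} to meet the hypotheses of Theorem~\ref{erosion_vs_homotopy_interleaving}, then Corollary~\ref{thm: HI between VR leq 2d_GH} for the Vietoris--Rips bound. You are actually more careful than the paper in two places: it omits the $G$-homotopy invariance needed to pass from the composition axiom to invariance under $G$-homotopy equivalences, and it never addresses that $W_\bullet$ and $X'_\bullet$ need not be $G$-CW. For the second point, use a functorial $G$-CW approximation applied objectwise rather than projective cofibrant replacement, since it visibly commutes with shifts and so turns each $\delta$-interleaving of $X'_\bullet,Y'_\bullet$ into a $\delta$-interleaving of $G$-CW replacements, which gives the bound directly.
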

\begin{proof}
    Recall the equivariant Whitehead's theorem (see \cite{Bredon1967EquivariantCohomology}) if $f\colon X\to Y$ is a $G$-weak homotopy equivalence between $G$-spaces admitting a $G$-CW structure, then $f$ is a $G$-homotopy equivalence. Moreover, $\cl_G, \cat_G, \zcl_G$ and $\TC_G$ are invariant under post and pre-composition with $G$-homotopy equivalence by Propositions~\ref{Categorical invariance} and \ref{Categorical invariance 2}. Now the desired inequality follows from Theorem~\ref{erosion_vs_homotopy_interleaving}.

    Since Vietoris–Rips filtrations of compact metric spaces with $G$-action are persistent $G$-CW complexes, the second inequality is a consequence of~\eqref{eq2}.
\end{proof}

\begin{proposition}\label{lower_bound}
    Let $b>0$ and let $a_x,a_y$ be positive real numbers such that $\tfrac{b}{2}< a_y< a_x< b$. Suppose $X_{\bullet}$ and $Y_{\bullet}$ be $G$-persistence spaces with associated $G$-categorical invariants satisfy
    \[\mathbf{I}_X^G(J)=\begin{cases}
        n& \text{if } J\subset (0,a_x),\\
        0& \text{if } J\not\subset [0,b],
    \end{cases}
    \quad \mathbf{I}_Y^G(J)=\begin{cases}
        m & \text{if } J\subset (0,a_y),\\
        0&\text{if } J\not\subset [0,b],
    \end{cases}\]
    where $n> m>0$.
    Then $d_E^G(X_{\bullet}, Y_{\bullet})\geq \tfrac{a_x}{2}$.

    Moreover, if $\mathbf{I}_X^G(J)=0$ whenever the length of $J$ exceeds $a_x$ and $\mathbf{I}_Y^G(J)=0$ whenever the length of $J$ is grater than $a_y$, then $d_E^G(X_{\bullet}, Y_{\bullet})= \tfrac{a_x}{2}$.
\end{proposition}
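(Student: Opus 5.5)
We argue directly from Definition~\ref{errosion}, together with the fact that an interval invariant coming from a $G$-categorical invariant is order-reversing on $(\mathscr{Int},\subseteq)$. Indeed, if $J\subseteq J'$ then the structure map over $J'$ factors as (structure map)$\circ f_J\circ$(structure map). Property (2) of Definition~\ref{G Categorical Invariant} then gives $\mathbf{I}^G(J')\leq \mathbf{I}^G(J)$. To avoid the clash with the fixed constant $b$, I write intervals as $[s,t]$.

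\emph{Lower bound.} Suppose $\mathbf{I}_X^G$ and $\mathbf{I}_Y^G$ are $\epsilon$-eroded for some $\epsilon<\tfrac{a_x}{2}$; we derive a contradiction. The key step is to choose a degenerate interval $[t,t]$ (or a sufficiently short interval $[t,t+\eta]$ if degenerate intervals are excluded) satisfying two conditions:
\begin{itemize}
\item[(i)] $[t,t]\subset(0,a_y)$, so that $\mathbf{I}_Y^G([t,t])=m$;
\item[(ii)] $[t-\epsilon,t+\epsilon]\subset(0,a_x)$, so that $\mathbf{I}_X^G([t-\epsilon,t+\epsilon])=n$.
\end{itemize}
This requires $\epsilon<t<\min\{a_y,\,a_x-\epsilon\}$. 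The interval of admissible $t$ is nonempty because:
\begin{itemize}
\item $a_x-\epsilon>\epsilon$, since $\epsilon<a_x/2$;
\item $a_y>\tfrac b2>\tfrac{a_x}{2}>\epsilon$, using the hypotheses $a_y>b/2$ and $a_x<b$.
\end{itemize}
The erosion inequality $\mathbf{I}_Y^G([t,t])\geq \mathbf{I}_X^G([t-\epsilon,t+\epsilon])$ then reads $m\geq n$, contradicting $n>m$. Hence no $\epsilon<a_x/2$ works, and $d_E^G\geq a_x/2$.

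\emph{Upper bound under the extra hypothesis.} Let $\epsilon>\tfrac{a_x}{2}$ and fix any $[s,t]\in\mathscr{Int}$. The interval $[s-\epsilon,t+\epsilon]$ has length at least $2\epsilon>a_x>a_y$. By the length hypotheses,
\[
\mathbf{I}_Y^G([s-\epsilon,t+\epsilon])=0 \quad\text{and}\quad \mathbf{I}_X^G([s-\epsilon,t+\epsilon])=0 .
\]
Both erosion inequalities then hold trivially, since all values are non-negative. So the invariants are $\epsilon$-eroded for every $\epsilon>a_x/2$. Taking the infimum and combining with the lower bound gives $d_E^G=\tfrac{a_x}{2}$.

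The only real obstacle is the choice of $t$ in the lower bound. One must check that the constraints coming from $a_y$ and from $a_x-\epsilon$ are simultaneously satisfiable, and this is exactly where the hypothesis $\tfrac b2<a_y<a_x<b$ is used.
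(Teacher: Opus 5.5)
Your proof is correct and is essentially the direct argument the paper imports from \cite[Proposition 4.6]{mavsmoli2025persistence}. For $\epsilon<a_x/2$ you take an interval in $(0,a_y)$ whose $\epsilon$-thickening stays in $(0,a_x)$, which is possible because $a_y>b/2>a_x/2$, and this forces $m\geq n$; for $\epsilon>a_x/2$ the length-vanishing hypotheses make both erosion inequalities trivial (your opening order-reversing remark is never used and can be dropped).
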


\begin{proof}
    The proof is identical to that of \cite[Proposition 4.6]{mavsmoli2025persistence}, with $\mathbf{I}_X$ and $\mathbf{I}_Y$ replaced by $\mathbf{I}_X^G$ and $\mathbf{I}_Y^G$, respectively. 
\end{proof}

\begin{corollary}
Let $n\geq 1$. Suppose $\mathbb{Z}_2$ acts on $\mathbb{S}^n$ by reflection in the last coordinate. Then
\[d_E^{\mathbb{Z}_2}(\TC_{\mathbb{Z}_2}(\vr_{\bullet}(\mathbb{S}^1), \TC_{\mathbb{Z}_2}(\vr_{\bullet}(\mathbb{S}^2))))\geq \frac{\alpha_1}{2}.\]
\end{corollary}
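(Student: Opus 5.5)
The plan is to apply Proposition~\ref{lower_bound} with $\mathbf{I}^G=\TC_{\mathbb{Z}_2}$, taking $X_\bullet=\vr_\bullet(\mathbb{S}^1)$ and $Y_\bullet=\vr_\bullet(\mathbb{S}^2)$. Here $\alpha_n:=\arccos(-\tfrac{1}{n+1})$, so $\alpha_1=2\pi/3$ and $\alpha_2=\arccos(-\tfrac13)$. Since $\alpha_1<\alpha_2$, the roles in Proposition~\ref{lower_bound} must be matched so that $a_x=\alpha_1$ is the larger scale of the two. To arrange this, I would choose whichever of $\vr_\bullet(\mathbb{S}^1)$, $\vr_\bullet(\mathbb{S}^2)$ has the larger constant value on small intervals to play the role of $X_\bullet$. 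I would then use $\alpha_1$ as the relevant threshold in the inequality $\tfrac{b}{2}<a_y<a_x<b$, with $b=\pi$.

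The first step is to collect the interval profiles. For $\mathbb{S}^2$, Proposition~\ref{TC_cal} gives $\TC_{\mathbb{Z}_2}(\vr_\bullet(\mathbb{S}^2))(J)=3$ for $J\subset(0,\alpha_2)$, and $0$ for $J\not\subset[0,\pi]$. For $\mathbb{S}^1$, Proposition~\ref{TC_cal} only covers $n\ge 2$, so I would redo that argument by hand. By \cite[Theorem 8]{LM25}, $\vr_t(\mathbb{S}^1)\simeq_{\mathbb{Z}_2}\mathbb{S}^1$ for $0<t<2\pi/3$, so on such intervals the invariant equals $\TC_{\mathbb{Z}_2}(\mathbb{S}^1)$ with the reflection action. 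I need to check that this value is a positive number $m$ different from $3$, plausibly $m=2$ by \cite{Colman_Grant_Eqtc}. The lower bound $m\geq 1$ already follows from $\zcl$, arguing as in Lemma~\ref{zcl_cal}. For $t>\pi$ the complex is a full simplex, so the value is $0$. All structure maps on these intervals are $\mathbb{Z}_2$-homotopy equivalences, so property (7) of Lemma~\ref{properties_of_tc_of_maps} reduces the value on $J$ to the value of the static space.

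The second step is to verify the hypotheses $n>m>0$ and $\tfrac{b}{2}<a_y<a_x<b$. With $b=\pi$ we have $\pi/2<2\pi/3<\alpha_2<\pi$, so the scale inequalities hold. The assignment of $n$ and $m$ must be compatible with which space carries the larger threshold.

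This matching of roles is the main obstacle. If the larger value $3$ sits on the larger threshold $\alpha_2$, then Proposition~\ref{lower_bound} directly yields $\alpha_2/2\geq\alpha_1/2$, which suffices. If instead the hypotheses hold with the roles reversed, I would not invoke the proposition. I would instead argue directly from Definition~\ref{errosion}: suppose both invariants are $\epsilon$-eroded with $\epsilon<\alpha_1/2$, and take $J=[\alpha_1/2-\eta,\alpha_1/2+\eta]$ for tiny $\eta$. Its $\epsilon$-thickening stays inside $(0,\alpha_1)\subset(0,\alpha_2)$. The erosion inequality then forces $\TC_{\mathbb{Z}_2}(\vr_\bullet(\mathbb{S}^2))(J)=3\le$ the $\mathbb{S}^1$ value on that thickened interval, or the symmetric inequality, depending on direction. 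Either way this contradicts $3\neq m$ as long as $m<3$, which gives $d_E^{\mathbb{Z}_2}\geq\alpha_1/2$. So the real content is computing $\TC_{\mathbb{Z}_2}(\mathbb{S}^1)<3$ for the reflection action, and that computation is where I would concentrate effort, using an explicit invariant open cover.
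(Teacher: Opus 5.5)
Applying Proposition~\ref{lower_bound} is the intended route (the paper gives no separate proof of this corollary). However, both inputs you feed into it are wrong, so the argument fails as written.

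First, $\arccos$ is decreasing, so $\alpha_1=\arccos(-\tfrac12)=2\pi/3\approx 2.094$ is \emph{larger} than $\alpha_2=\arccos(-\tfrac13)\approx 1.911$. The correct chain is $\pi/2<\alpha_2<\alpha_1<\pi$, and the inclusion $(0,\alpha_1)\subset(0,\alpha_2)$ on which your fallback erosion argument rests is false.

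Second, $\TC_{\mathbb{Z}_2}(\mathbb{S}^1)$ for the reflection is not a finite $m<3$; it is $\infty$. The fixed set is $\{(\pm1,0)\}$, and the point $((1,0),(-1,0))\in\mathbb{S}^1\times\mathbb{S}^1$ is fixed by the diagonal action. Any $\mathbb{Z}_2$-map $s_i$ defined on an invariant neighbourhood of it must send it to a $\mathbb{Z}_2$-fixed path, i.e.\ a path from $(1,0)$ to $(-1,0)$ inside $(\mathbb{S}^1)^{\mathbb{Z}_2}$, and no such path exists. So the explicit invariant cover you plan to construct cannot exist.

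The same obstruction applies to every structure map $f_s^t\colon\vr_s(\mathbb{S}^1)\to\vr_t(\mathbb{S}^1)$ with $[s,t]\subset(0,\alpha_1)$. Every vertex $v$ of an invariant simplex of diameter $\le t$ satisfies $d(v,\bar v)\le t$, hence lies within $t/2$ of $(1,0)$ or of $(-1,0)$. Such a simplex cannot have a vertex $x$ near $(1,0)$ and a vertex $y$ near $(-1,0)$: it would also contain $\bar y$, and $\max\{d(x,y),d(x,\bar y)\}\ge\pi-t/2>t$. Thus the two fixed vertices lie in different path components of $|\vr_t(\mathbb{S}^1)|^{\mathbb{Z}_2}$, and $\TC_{\mathbb{Z}_2}(\vr_\bullet(\mathbb{S}^1))(J)=\infty$ for $J\subset(0,\alpha_1)$.

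Even if your value $m<3$ were right, with the true ordering your erosion argument would only give $\alpha_2/2$. The contradiction $m\ge 3$ needs the $\epsilon$-thickened interval to lie in $(0,\alpha_2)$, i.e.\ $\epsilon<\alpha_2/2$.

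The correct bookkeeping is that the larger value sits on the larger threshold, exactly as Proposition~\ref{lower_bound} requires. Take $X_\bullet=\vr_\bullet(\mathbb{S}^1)$ with $a_x=\alpha_1$ and value $\infty$, take $Y_\bullet=\vr_\bullet(\mathbb{S}^2)$ with $a_y=\alpha_2$ and the finite positive value $m=\TC_{\mathbb{Z}_2}(\mathbb{S}^2)$ from Proposition~\ref{TC_cal} (only finiteness and positivity matter), and set $b=\pi$.

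Directly: for $\epsilon<\alpha_1/2$, pick $t$ with $\epsilon<t<\min\{\alpha_2,\alpha_1-\epsilon\}$, which is possible because $\alpha_2>\pi/2>\alpha_1/2>\epsilon$. Then $\TC_{\mathbb{Z}_2}(\vr_\bullet(\mathbb{S}^2))([t,t])=m<\infty=\TC_{\mathbb{Z}_2}(\vr_\bullet(\mathbb{S}^1))([t-\epsilon,t+\epsilon])$. So the invariants are not $\epsilon$-eroded, and $d_E^{\mathbb{Z}_2}\ge\alpha_1/2$.

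The missing idea in your proposal is this fixed-point obstruction on $\mathbb{S}^1$; the $\zcl$ bound you mention plays no role.
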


\vspace{0.3cm}
\noindent \textbf{Acknowledgement:}
The authors would like to thank the organisers of the  OneMath World School 2026, held at Chennai Mathematical Institute for their valuable discussions.
Navnath Daundkar gratefully acknowledges the support of the DST-INSPIRE Faculty Fellowship (Faculty Registration No.~IFA24-MA218), Department of Science and Technology, Government of India; the Industrial Consultancy and Sponsored Research (IC\&SR), Indian Institute of Technology Madras, through the New Faculty Initiation Grant (RF25261395MANFIG009294).  Bittu Singh is grateful to the Prime Minister's Research Fellowship (PMRF ID 1302644), Government of India, for his financial support.
\vspace{0.5cm}

\noindent \textbf{AI Declaration:}
During the preparation of this manuscript, the authors used ChatGPT (OpenAI) for language editing, including improving grammar, clarity, and readability.

\bibliographystyle{plain} 
\bibliography{ref_eqtc}
\end{document}